\newcommand{\C}{{\mathbb C}}
\newcommand{\R}{{\mathbb R}}
\newcommand{\Pp}{{\mathbb P}}
\newcommand{\h}{{\mathcal H}}
\newcommand{\lb}{{\lambda}}
\numberwithin{equation}{section}
\theoremstyle{plain}
\newtheorem{theorem}{Theorem}[section]
\newtheorem{lemma}[theorem]{Lemma}
\newtheorem{proposition}[theorem]{Proposition}
\newtheorem{corollary}[theorem]{Corollary}
\newtheorem{problem}[theorem]{Problem}
\newtheorem{example}[theorem]{Example}
\newtheorem*{definition*}{Definition}
\newtheorem{remark}[equation]{Remark} 
\newtheorem{definition}[equation]{Definition}
\begin{document}
\title[Projective spectra and common invariant subspaces]{Spectral test of reducibility for matrix tuples}
\author{Michael  Stessin}
\address{Department of Mathematics and Statistics \\
University at Albany \\
Albany, NY 12222}
\email{mstessin@albany.edu}
\author{Rongwei Yang} 
\address{Department of Mathematics and Statistics \\
University at Albany \\
Albany, NY 12222}
\email{ryang@albany.edu}
\begin{abstract}
If a tuple of matrices has a common invariant subspace, its projective joint spectrum has an algebraic component.  In general, the converse is not true, and there might be algebraic components in the projective joint spectrum without corresponding common invariant subspaces. In this paper we give necessary and sufficient conditions for the occurrence  of such correspondence.

\end{abstract}
\keywords{projective joint spectrum, algebraic manifold, reducing subspace}
\subjclass[2010]
{Primary:  47A25, 47A13, 47A75, 47A15, 14J70.	Secondary: 47A56, 47A67}
\maketitle

\section{\textbf{Introduction}}

%\section{\textbf{Admissible transformations and associated projections}}
%Let $A_1,...,A_n$ be a tuple of $N\times N$ complex matrices. 

Problems concerning common invariant subspaces naturally appear in the study of representations $\rho$ of a group or an algebra ${\mathcal A}$. There are two related but different questions: 1) Is $\rho$ reducible? 2) Is $\rho$ decomposable into a direct sum of two sub-representations? The answer to 1) is positive if and only if the linear operators $\rho(g),\ g\in {\mathcal A}$, have a nontrivial common invariant subspace; while 2) is positive if and only if the operators have a nontrivial common reducing subspace. In the case ${\mathcal A}$ is generated by a finite set $\{a_1, ..., a_n\}$, the two questions lead naturally to the discussion on the reducibility of the projective joint spectrum $\sigma(A)$, where $A_i=\rho(a_i), i=1, ..., n$. If matrices $A_1,...,.A_n$ have a common invariant subspace, then it is not hard to see that $\sigma(A)$ has an algebraic component. The converse is not true: there are simple examples of matrices, whose projective joint spectra have algebraic components, that have no common invariant subspaces. The issue of finding necessary and sufficient conditions for the existence of common reducing subspaces was considered in \cite{S}. However, the conditions found there are generally not applicable to the description of common invariant (but not reducing) subspaces.

%If a matrix tuple has a common invariant subspace, the projective joint spectrum of the tuple and the identity contains a proper algebraic component. The converse is not true. 

The goal of this paper is to establish some necessary and sufficient conditions for the correspondence between proper components of $\sigma(A)$ and common ivariant subspaces.
%converse to hold. 
Our approach employs the techniques developed in recent works \cite{S,Y1}, and it demonstrates a special role played by projections belonging to the unital $C^*$ algebra generated by the matrices of the tuple and their adjoints. 

%In this section, we give necessary definitions and state our main results.
\vspace{.2cm}

Given a $n$-tuple of $N\times N$ matrices $A=(A_1,...,A_n)$, the linear combination $A(x):=x_1A_1+\cdots +x_nA_n, x_j\in \C$ is often called a {\em linear pencil} of $A$.
Its determinant $\det A(x)$ is a homogeneous polynomial of degree $N$ in the variables $x:=(x_1,...,x_n)\in \C^n$. Zeros of this polynomial constitute an algebraic variety in the projective space $\C\Pp^{n-1}$, called the \textit{determinantal variety} (or \textit{determinantal hypersurface}) of the tuple $A$. We use the following notation \small$$\sigma(A)=\big\{x=[x_1:\cdots :x_n]\in \C\Pp^{n-1}: \ \det A(x)=0\big\}.$$\normalsize
An infinite dimensional analog of determinantal variety, called \textit{projective joint spectrum} of a tuple of operators acting on a Hilbert space $\h$, was introduced in \cite{Y}. It is defined by \[\sigma(A)=\big\{x\in \C\Pp^{n-1}: \ A(x) \ \mbox{is not invertible}\big\}.\]

To avoid trivial redundancies it is frequently assumed that at least one of the operators is invertible, and, thus, can be assumed to be the identity (or rather $-I$). In what follows we always assume that $A_{n+1}=-I$ and write $\sigma(A)$ instead of $\sigma(A,-I)$.

Determinantal varieties of matrix tuples have been under scrutiny for more than a hundred years. Notably, the study of group determinants led Frobenius to laying out the foundation of representation theory. There is an extensive literature on the question when an algebraic variety of codimension 1 in a projective space admits a determinantal representation. Without trying to give an exhausting account of the references on this topic, we just mention \cite{D1}-\cite{D5}, \cite{KV, V}, as well as the monograph \cite{D} and references therein.

In the last 10-15 years projective joint spectra of operator and matrix tuples have been intensely investigated, for instance see  \cite{BCY, CY, CSZ}, \cite{DY} - \cite{GLX}, \cite{MQW} - \cite{PS2}, \cite{S1} - \cite{Y1}). An underlying question is as follows.
\begin{problem}
What does the geometry of $\sigma(A)$ tell us about the relations among $A_1, ..., A_n$? 
\end{problem}
Relating to group theory, \cite{CST, GY, PS1} characterize representations of infinite dihedral group, non-special finite Coxeter groups, and some classical finite groups related to the Hadamard matrices of Fourier type in terms of projective joint spectra. Parallelly, spectral properties of Lie algebra representations were investigated in \cite{AY, CCD, GLX, HZ}. Furthermore, spectral rigidity for the infinitesimal generators of representations of twisted $S_\nu U(2)$ was established in \cite{S2}. A recent monograph \cite{Y1} is a good source for information on the subject. A study that underpins the investigation here is \cite{S3}, which proves that for an arbitrary operator $T$ acting on $\h$ the projective joint spectrum $\sigma (D, S, S^*, T)$ determines $T$, where $S$ is the unilateral shift, and $D$ is a fixed bounded diagonal operator, $D=diag(\lambda_1,\lambda_2,...)$, whose diagonal entrees $\lambda_j$ are distinct, and none of them is an accumulation point of the set $\Lambda=\{\lambda_j: \ j=1,2,...\}$.
%eigenvalues have no accumulation point and are all of multiplicity 1 (??). 

For a pair $A=(A_1, A_2)$ of bounded self-adjoint operators on $\h$, the question when the appearance of an algebraic component in $\sigma(A)$ implies the existence of a common invariant subspace for $A_1$ and $A_2$ was initially considered in \cite{ST}. A sufficient condition presented there is applicable to a restricted class of operators consisting of compact operators plus scalar multiples of the identity. It was expressed in terms of the projective joint spectra of tensor powers of operators acting on the corresponding exterior power of $\h$. This condition was by far not necessary. Recently, paper \cite{S} gives some necessary and sufficient conditions relating algebraic components of $\sigma(A)$ with finite multiplicity to common reducing subspaces. In general, if a tuple is non-self-adjoint, a common invariant subspace is not necessarily reducing. The goal of this paper is to present some necessary and sufficient conditions on this issue for an arbitrary matrix tuple. 

%First, we are doing this for self-adjoint tuples, and then, as a direct corollary, obtain the corresponding conditions for arbitrary tuples.

%In what follows we will use the notation $\sigma(A_1,...,A_n)$ for the projective joint spectrum of $A_1,...,A_n$ and $-I$:
%\begin{eqnarray*} 
%&\sigma(A_1,...,A_n)= \\
%&\big\{[x_1:\cdots :x_n:x_{n+1}]\in \C\Pp^n : \ det(x_1A_1+\cdots +x_nA_n-x_{n+1}I)=0\big\}.
%\end{eqnarray*}

%\begin{remark}\label{remark 2}
%We now describe our main results. 
The structure of this paper is as follows. In section \ref{admis} we establish an existence of what we call ``admissible transformations". Those are linear maps that transform our tuple into a one whose projective joint spectrum has no singular points on coordinate projective lines. Passing to an admissible tuple allows us to associate components of the projective joint spectrum with projections on invariant subspaces, and these projections are elements of the algebra generated by the matrices in the tuple. This is done in section \ref{pr}. Finally, in section \ref{m} we prove our main results that give necessary and sufficient conditions for a proper component of the projective joint spectrum to correspond to a common invariant subspace.

\section{Admissible Transformations}\label{admis}

First, we  remark  that, since adding to an operator a multiple of the identity does not change the lattice of invariant subspaces,  when dealing with common invariant subspaces we may assume without loss of generality that all operators $A_j$ are invertible. In the sequel, we consider $N\times N$ matrices $A_1,...,A_n$ and the pencil $A(x)=x_1A_1+\cdots +x_nA_n-x_{n+1}I$. Suppose that
%\end{remark}
 %$$\hspace{2cm} \Gamma \neq \big\{  [x_1:...:x_{N+1}] : \ x_{N+1}=0 \big\} $$.
\iffalse %%%%%%%%%%%%%
 Indeed,  since for every $t\in \R$ the reducing subspaces of $A_j$ and $\widehat{A}_j=A_j+tI$ are the same, we might choose $t> \max\{ \parallel A_k\parallel: \ k=1,...,.n\}$   and pass to the tuple $\widehat{A}_1,...,\widehat{A}_n$ which has the same lattice of common invariant subspaces, but each $\widehat{A}_j$ is invertible. It is also easy to see the connection between the projective joint spectra of the tuples $A_1,...,A_n$ and $\widehat{A}_1,...,\widehat{A}_n$:
\small \begin{eqnarray} 
&[x_1:\cdots:x_N:x_{n+1}]\in \sigma(\widehat{A}_1,...,\widehat{A}_N)  \nonumber \\
&\iff [x_1:\cdots:x_N:(x_{n+1}-t(x_1+\cdots +x_N))] \in \sigma(A_1,...,A_n). \label{projective transform} 
\end{eqnarray} \normalsize
%Since the transformation \eqref{projective transform} is invertible and takes $\R\Pp^n$ onto itself, it is obvious that, if $\Gamma \subset \sigma_p(A_1,...,A_N)$ is $\R$-regulat, then so is   $\widehat{\Gamma}$, its image under \eqref{projective transform}
%Of course, if $\Gamma \subset \sigma(A_1,...,A_N)$ is $\R$-regulat, then so is   $\widehat{\Gamma}$, its image under \eqref{projective transform}. 
Thus, now and on we will assume that all matrices $A_j$ are invertible without  additionally mentioning it. 
\fi %%%%%%%%%%
\begin{eqnarray}
\det A(x)=R_1(x)^{m_1} R_2(x)^{m_2}\cdots R_k(x)^{m_k}, \label{polynomial}
\end{eqnarray}
where each $R_s$ is an irreducible homogeneous polynomials of degree $l_s$. Thus, we have $l_1m_1+\cdots +l_km_k=N$. The invertibility of $A_1,...,A_n$ implies that for each $1\leq s\leq k$ and $1\leq j\leq n$ the intersection of the variety $\{R_s=0\}\subset \C\Pp^{n}$ with the coordinate projective line 
$$L_j=\big\{ [0:\cdots :x_j: 0: \cdots :1]: \ x_j\in \C \big\}$$ consists of $l_s$ points counting multiplicity.  
\iffalse %%%%%%%%%%%%
Indeed, fix $1\leq s\leq k$ and $1\leq j\leq n$. We claim that the monomial $x_j^{l_s}$ is present in  $R_s(x_1,...,x_{n+1})$ with a non-trivial coefficient, as otherwise 
$$R_s(\underbrace{0,...,0,}_{j-1}1,0,...,0)=0,$$
which implies that $0\in \sigma(A_j)$, a contradiction.  
Of course, this shows that
$$R_s(0,...,0,x_j,0,...,0, 1)=0 $$
has $l_s$ non-trivial complex roots $t_1,...,t_{l_i}$ (counting multiplicity).
\fi %%%%%%%%%%%%%
Following \cite{ST}, we call the part of $\sigma(A)$ that lies in the chart $\{x_{n+1}=1\}\subset \C\Pp^{n}$ the \textit{proper projective spectrum} of the tuple $A$ and denote it by $\sigma_p(A)$. For simplicity, in the sequel we shall identify the chart $\{x_{n+1}=1\}\subset \C\Pp^{n}$ with $\C^n$. Thus $\sigma_p(A)$ is a subset of $\C^n$.
Evidently, $\sigma(A)\cap L_j \subset \sigma_p(A), \ j=1,...,n. $

\vspace{.2cm}

For an $n\times n$ complex matrix $C=[c_{ij}]_{i,j=1}^n$, consider the following tuple transformation:
\begin{equation}\label{transform}
\widehat{A}_j=\displaystyle \sum_{s=1}^n c_{js}A_s, \ j=1,...,n. 
\end{equation} 

It is easy to see that the projective joint spectrum of $\widehat{A}_1,...,\widehat{A}_n$ is given by
$$\sigma(\widehat{A})=\Big\{\Big( \displaystyle \prod_{s=1}^k R_s^{m_s}(x\mathscr{C})\Big)=0\Big\}, $$
where $\mathscr{C}$ is the $(n+1)\times (n+1)$ obtained from $C$ by adding the $(n+1)$-th  column $(0,...,0,1)^T$ and the $(n+1)$-th row $(0,...,0,1)$.

\begin{definition}\label{admissible}
We call a transformation \eqref{transform} \textit{admissible}, if
\begin{itemize}
\item[(1)] the matrix $C$ is invertible and, if the tuple $A$ consists of self-adjoint matrices, $C$ is real-valued;
\item[(2)] for each $1\leq j\leq n$, at every point of intersection of $\sigma(\widehat{A}) $ with $L_j$, the derivative of $ \prod_{s=1}^k R_s(x\mathscr{C})$ with respect to $x_j$ is not equal to 0.
\end{itemize}	
\end{definition}
\noindent Part (2) above indicates that every point of intersection of $\sigma(\widehat{A})$ with $L_j$ is a regular point of the algebraic variety $\{ \prod_{s=1}^k R_s(x\mathscr{C})=0\}$. In the sequel, tuples with this spectral property will be called {\em admissible tuples}. Since every regular point has multiplicity 1, we see that the intersection of $\big\{R_s(x)=0\}$ with $L_j$ consists of $l_s$ distinct points, and these sets of points are different for different $s$. The following Theorem extends \cite[Theorem 1.11]{S}. 

\begin{theorem}\label{admissible tr}
There are admissible transformations in every neighborhood of the identity.	
\end{theorem}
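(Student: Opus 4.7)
The plan is to parametrize admissible transformations by matrices $C \in M_n(\mathbb{C})$ and show that the non-admissible $C$ form a proper algebraic subvariety, whose complement is therefore open and dense. Since invertibility (and reality in the self-adjoint case) is automatic on a neighborhood of the identity, everything reduces to verifying condition (2) of the definition.

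First I would reformulate condition (2) line-by-line. The intersection $\sigma(\widehat{A}) \cap L_j$ depends only on the $j$-th row $(c_{j1},\ldots,c_{jn})$ of $C$, because the point $[0{:}\cdots{:}t{:}\cdots{:}0{:}1] \in L_j$ (with $t$ in the $j$-th slot) is mapped by $\mathscr{C}$ to $(tc_{j1},\ldots,tc_{jn},1)$. Setting
$$F_j(t) := \prod_{s=1}^{k} R_s(tc_{j1},\ldots,tc_{jn},1),$$
I would observe that, for $C$ sufficiently close to $I$, $F_j$ is a polynomial in $t$ of degree $L := l_1 + \cdots + l_k$ (its leading coefficient $\prod_s R_s(c_{j1},\ldots,c_{jn},0)$ being nonzero near $I$), and that condition (2) for index $j$ is equivalent to $F_j$ having $L$ distinct roots. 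Indeed, the chain rule gives $F_j'(t_0) = \partial_{x_j}\bigl(\prod_s R_s(x\mathscr{C})\bigr)$ at the corresponding point of $L_j$. Thus condition (2) amounts to $\Delta_j := \mathrm{disc}_t\, F_j \neq 0$, a polynomial condition on the entries of the $j$-th row of $C$.

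The main step is to prove that each $\Delta_j$ is not identically zero; equivalently, that for some $v \in \mathbb{C}^n$ the polynomial $\prod_s R_s(tv_1,\ldots,tv_n,1)$ has $L$ distinct roots. Because $R_1,\ldots,R_k$ are pairwise distinct irreducible polynomials, $P := R_1 R_2 \cdots R_k$ is squarefree, and in the affine chart $\{x_{n+1}=1\}$ it defines a reduced hypersurface $V \subset \mathbb{C}^n$ not containing the origin (since $[0{:}\cdots{:}0{:}1] \notin \sigma(A)$ forces $P(0,\ldots,0,1) \neq 0$). A Bertini-type argument then shows that the locus of directions $v$ for which the line $\{tv : t \in \mathbb{C}\}$ either is tangent to $V$ at a smooth point or passes through a point of the singular locus $V_{\mathrm{sing}}$ (which has codimension $\geq 2$ in $\mathbb{C}^n$ and includes the pairwise intersections $\{R_s = R_{s'} = 0\}$) is a proper subvariety of $\mathbb{C}^n$. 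For every $v$ outside this bad locus, $F_j$ has $L$ distinct roots, hence $\Delta_j \not\equiv 0$.

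Putting everything together, $\{C \in M_n(\mathbb{C}) : \det C = 0\} \cup \bigcup_{j=1}^n \{C : \Delta_j(c_{j1},\ldots,c_{jn}) = 0\}$ is a proper algebraic subvariety of $M_n(\mathbb{C})$, so its complement is dense and meets every neighborhood of $I$. For the self-adjoint case $\det A(x)$ is a real polynomial, so the irreducible factors come in complex-conjugate pairs, $P$ has real coefficients (after a suitable constant rescaling), and each $\Delta_j$ is a real polynomial in the real entries of row $j$; not vanishing identically over $\mathbb{C}$ implies not vanishing identically over $\mathbb{R}$, and real admissible transformations are dense near $I$ as well. The main obstacle is the Bertini step: one must simultaneously rule out tangencies of the line with smooth points of $V$ and incidences with the codimension-$\geq 2$ singular locus, uniformly in the direction parameter $v$.
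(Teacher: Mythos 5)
Your proof follows the same underlying strategy as the paper: condition (2) of admissibility is translated into the non-tangency of the line $\ell_v$ to the reduced hypersurface $V=\{R_1\cdots R_k=0\}$ at smooth points, together with avoidance of the singular locus (which contains the pairwise intersections $\Gamma_r\cap\Gamma_s$), and one then argues that generic directions $v$ near each coordinate direction $e_j$ work. Your packaging via the discriminant $\Delta_j=\mathrm{disc}_t\,F_j$ is a nice touch: it exhibits the bad set directly as a Zariski-closed subset of the space of matrices $C$ (cut out by $\det C=0$ and the $\Delta_j$), so the good set is automatically Zariski-open; the paper instead obtains density via Sard's theorem together with a Whitney stratification of the singular locus, and openness is argued separately. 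Both routes, however, hinge on the same nontrivial lemma, which you correctly flag as the crux and leave unproved: that the tangent/singular bad locus is a \emph{proper} subvariety, i.e.\ $\Delta_j\not\equiv 0$. The paper proves this by working with the Euler field $\theta=\sum_i x_i\partial_{x_i}$: the directional derivative of $\widetilde R_s=R_s(\cdot,1)$ along $\ell_x$ at $x$ is $\theta(\widetilde R_s)(x)$, so the bad locus for a single component is $\Delta_s=\{\widetilde R_s=\theta(\widetilde R_s)=0\}$; one then shows $\dim\Delta_s\le n-2$ because otherwise the irreducibility of $\widetilde R_s$ would force $\theta(\widetilde R_s)$ to be a nonzero scalar multiple of $\widetilde R_s$, which is impossible since $0\in\{\theta(\widetilde R_s)=0\}$ but $0\notin\Gamma_s$ (this is precisely where the invertibility normalization enters). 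Separately, each $\Xi_{rs}=\Gamma_r\cap\Gamma_s$ has codimension $\ge 2$ since $\widetilde R_r,\widetilde R_s$ are distinct irreducibles. Filling in your ``Bertini-type argument'' with this Euler-field computation would make the proposal complete and essentially identical in content to the paper's proof; note also that in the self-adjoint case you do not actually need $P$ to have real coefficients --- since $\R^n$ is Zariski-dense in $\C^n$, $\Delta_j\not\equiv 0$ on $\C^n$ already forces $\Delta_j\not\equiv 0$ on $\R^n$, which suffices.
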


\begin{proof}
In fact, we will show that the set of admissible transformations is open and dense in a neighborhood of the identity. To this end, it suffices to prove that for each $1\leq j \leq n$ and $1\leq s \leq  k$ the set of invertible matrices $\mathscr{C}$ for which the intersection of
\begin{equation}\label{hat r}
{\widehat{\Gamma}}_s := \Big\{x\in \C\Pp^{n}:\ R_s(x\mathscr{C})=0\Big\}
\end{equation}
with the coordinate line $L_j$ consists solely of regular points satisfying condition (2) of definition \ref{admissible} is open and dense in a neighborhood of the identity matrix. As  mentioned above, the coordinate line $L_j$ meets $\Gamma_s=\big\{ R_s(x)=0\big\}$ at $l_s$ points counting multiplicity, all of them belonging to the chart $\big\{x_{n+1}=1\big\}=\C^n$. Let us denote by ${\mathcal O}_{1,j},...,{\mathcal O}_{l_s,j}$ some disjoint neighborhoods of these points that separate each one from the others. 
%Write
%\begin{equation}\label{tilde gamma and r}
%(Is\ this\ needed?)\ \Gamma_{sr}=\Gamma_s \cap {\mathcal O}_{r,j}, \ r=1,...,l_s. 
%\end{equation}
We set \[\widetilde{R}_s(x_1,...,x_n):=R_s(x_1,...,.x_n,1), \ \ \ x\in \C^n.\] Then $\Gamma_s$ can be identified as the zero set of $\widetilde{R}_s$. Observe that since $R_s$ is irreducible and homogeneous of degree $l_s$, by factoring $x_{n+1}^{l_s}$ out of $R_s$, we see that $\widetilde{R}_s$ is also irreducible. The directional derivative (or {\em Euler field} \cite{Gri}) in $\C^n$ is defined as $\theta=\sum_{j=1}^{n}x_j\frac{\partial}{\partial x_j}$
%  \cite{Gri}. 
For $z=(z_1,...,z_n)\neq 0$, let $\ell_z$ denote the line in $\C^n $ that passes through the origin and $z$, i.e., $$\ell_z=\big\{ (\lambda z_1,...,\lambda z_n): \ \lambda \in \C \big\}. $$ 
 Suppose that $x\in \widetilde{\Gamma}_s$. Then the derivative of $\widetilde{R}_s$ in the direction of $\ell_x$ evaluated at $ y=(\lambda x_1,...,\lambda x_n)$ is 
%%%%%%%%%%%

\begin{eqnarray}
&\frac{\partial \widetilde{R}_s}{\partial \ell_x} \Big|_y :=x_1\frac{\partial  \widetilde{R}_s}{\partial x_1}\Big|_y+\cdots +x_n\frac{\partial  \widetilde{R}_s}{\partial x_n}\Big|_y=\frac{1}{\lambda}\theta (\widetilde{R}_s)(y). \label{directional derivative}
\end{eqnarray}

Consider the algebraic set 
\[ \Delta_s:=\{ \zeta \in \C^{n}:\ \widetilde{R}_s(\zeta)= \theta (\widetilde{R}_s)(\zeta)=0\}.\]
We claim that the dimension (over $\C$) of $\Delta_s$ is less than $n-1$. Indeed,  this dimension is obviously less than or equal to $n-1$, as $\Delta_s \subset {\Gamma}_s$ and $\dim {\Gamma}_s=n-1$. If $\dim \Delta_s=n-1$, then, since ${\Gamma}_s$ is irreducible, we must have $\Delta_s={\Gamma}_s$. Since the polynomial $\widetilde{R}_s$ is irreducible, the polynomial $ \theta (\widetilde{R}_s)$ must be a multiple of $\widetilde{R}_s$. If follows that $\theta (\widetilde{R}_s)$ must be a scalar multiple of $\widetilde{R}_s$, since the two polynomials have the same degree. Obviously this scalar is not 0, so the zero set of $ \theta (\widetilde{R}_s) $ is ${\Gamma}_s$, which is a contradiction, as $0$ is in the zero set of $ \theta (\widetilde{R}_s)(\zeta)$, but not in ${\Gamma}_s$.
 
Evidently, the singular locus of ${\Gamma}_s$ is contained in $\Delta_s$. Hence every point in ${\Gamma}_s \setminus \Delta_s$ is a regular point. 
Note that the quotient map \[{\mathscr F}: \C^{n} \setminus \{0\} \to \C\Pp^{n-1},\ \ \ {\mathscr F}(x)=\ell_x\]
restricted to ${\mathcal O}_{r,j}$ is differentiable. We claim that the image $ {\mathscr F}\big(({\Gamma}_s\setminus \Delta_s)\cap {\mathcal O}_{r,j}\big)$ is open and dense in ${\mathscr F}({\mathcal O}_{r,j})$, which is a neighborhood of $[\underbrace{0: \cdots : 0}_{j-1}:1:0: \cdots :0]$ in $\C\Pp^{n-1}$. Indeed, since $\dim(\Delta_s)\leq n-2$, Sard's theorem \cite{Sa} and the existence of Witney's stratification \cite{LT} imply that the measure of ${\mathscr F}(\Delta_s)$ is equal to 0, and ${\Gamma}_s \setminus \Delta_s$ is open and dense in ${\Gamma}_s$. For every $x\in {\Gamma}_s \setminus \Delta_s$, the line $\ell_x$ is not tangent to ${\Gamma}_s$, as otherwise we would have $x\in \Delta_s$ by \eqref{directional derivative}. This implies that there exists a small neighborhood $V$ of $x$ such that the restriction of ${\mathscr F}$ to ${\Gamma}_s\cap V$ is an isomorphism, implying that ${\mathscr F}({\Gamma}_s\cap V)$ is an open neighborhood of ${\mathscr F}(x)$ in $\C\Pp^{n-1}$ and consequently has a positive measure. Finally, since the image of a compact set under ${\mathscr F}$ is closed, we see that, if ${\mathcal O}_{r,j}$ sufficiently small, $\Delta_s\cap {\mathscr F}({\mathcal O}_{r,j})$ is closed, which finishes the verification of our claim.

As a result, we see that the set of lines $\ell_x, \ x\in {\Gamma}_s$ close to the $j$-th coordinate line $L_j$ that do not meet $\Delta_s$ is open and dense in a small neighborhood of $L_j$. Equation \eqref{directional derivative} shows that every point of the intersection of such line with ${\Gamma}_s$ is a regular point. 
To proceed, we set $$ \Xi_{rs}={\Gamma}_r \cap {\Gamma}_s= \big\{x\in \C^{n}: \ \widetilde{R}_s(x)=\widetilde{R}_r(x)=0 \big\}, \ r,s=1,...,k.$$
As $\widetilde{R}_s$ and $\widetilde{R}_r$ are irreducible, $\Xi_{rs}$ is an algebraic set of dimension less than or equal to $n-2$. An argument similar to the one above shows that ${\mathscr F}(\Xi_{rs})$ is closed and has measure 0. Thus $$ \omega_{s,j}:={\Gamma}_s \setminus \big (\Delta_s \cup_{r\neq s}{\mathscr F}(\Xi_{rs})\big)$$
is open and dense in $\cap_{r=1}^{l_s}{\mathscr F}\big({\mathcal O}_{r,j}\big)$.

Finally, we set $\Omega_j=\cap_{s=1}^k \omega_{s,j}. $ Then for every $x\in \Omega_j$ the line $\ell_x$ intersects the component $\{R_s=0\}$ of $\sigma(A_1,...,A_n)$ at $l_s$ distinct points
%$l_1m_1+\cdots +l_km_k$ distinct points, 
each of which is a regular point and has multiplicity $m_s$ in $\sigma(A_1,...,A_n)$.

For every $j=1,...,n$, choose a point $\xi_j=(\xi_{1}^j,...,\xi_{n}^j)\in \sigma(A_1,...,A_n)$ such that $\ell_{\xi_j} \in \Omega_j$ and consider the following matrix:
\begin{equation} \label{admiss}
C_{\Xi }=\left [ \begin{array}{ccc} \xi_1^1 & \cdots & \xi_n^1 \\ \cdot & \cdot & \cdot \\ \xi_1^n & \cdots & \xi_n^n \end{array} \right ]. 
\end{equation} 
If all neighborhoods ${\mathcal O}_{rj}$ are small enough, then $\xi_j$ is close to $L_j$, and hence the matrix $C_\Xi$ is invertible. Furthermore,  for each $1\leq s\leq k$ and every point $y\in L_j$,  it follows from \eqref{directional derivative} that $$\frac{\partial {R}_s(x{\mathscr C}_\Xi)}{\partial x_j}\Big|_y=\frac{\partial \widetilde{R}_s}{\partial \ell_{\xi_j}}\Big|_y\neq 0,$$and this completes the proof.
\end{proof}

\vspace{.5cm}
%\section{\textbf{Proof of the main theorem}}
\begin{remark}\label{2.9}
Combining the above theorem with Theorem 1.11 and Lemma 4.1 in \cite{S}, we see that if all matrices $A_1,...,.A_n$ are self-adjoint, then in every neighborhood of the identity matrix there is a real-valued admissible transformation.	
\end{remark}

\section{Construction of Projections}\label{pr}

In the sequel, a projection refers to a linear operator $P$ such that $P^2=P$. Such an operator is also called an idempotent in the literature. 
Since a subspace $M\subset \C^N$ is invariant for $A_1, ..., A_n$ if and only if it is invariant for $\widehat{A}_1, ..., \widehat{A}_n$, in light of Theorem \ref{admissible tr} we assume without loss of generality that the tuple $A=(A_1, ..., A_n)$ is admissible throughout the rest of the paper. For a fixed $j$ and each $1\leq s\leq k$, we write \[ \Gamma_s\cap L_j=\{t_{j,s,1},...,t_{j,s,l_s}\}.\] Observe that each $1/t_{j,s,r}, \ s=1,...,k, \ r=1,...,l_s$ is an eigenvalue of $A_j$ of multiplicity $m_s$. For a subset $S \subset \{1,...,k\}$, we let $S^c$ denote the complement $\{1,....,k\}\setminus S$. We will now express the projections onto the subspaces associated with $S$ as elements of the free algebra generated by ${A}_1,..., {A}_n$. The method is based on functional calculus and Cayley-Hamilton theorem.

First, consider a $N\times N$ matrix $T$ with distinct eigenvalues $\lb_1, ..., \lb_k$ and corresponding multiplicities $m_1, ..., m_k$. Then the characteristic polynomial of $T$ is \[q(z)=\prod_{s=1}^k (z-\lb_s)^{m_s}.\] With respect to any subset $S \subset \{1,...,k\}$, we consider the decomposition $q(z)=q_S(z)q_{S^c}(z)$, where $q_S(z)=\prod_{s\in S} (z-\lb_s)^{m_s}$. Evidently, $q_S(\lb_s)\neq 0$ for each $s\in S^c$. Define 
\begin{equation}\label{idem}
\alpha_S=(-1)^{|S^c|}\prod_{s\in S^c} \left(q_S(\lb_s)\right)^{m_s},\ \ \  p_S(z)=\prod_{s\in S^c} (q_S(z)-q_S(\lb_s))^{m_s}.
\end{equation}
Observe that $p_S(\lb_s)=\alpha_S$ if $s\in S$ and $p_S(\lb_s)=0$ if $s\in S^c$. We set ${\mathscr P}_{S}=p_S(T)/\alpha_S$. Then spectral mapping theorem implies that $\sigma({\mathscr P}_{S})=p_S(\sigma (T))/\alpha_S=\{0, 1\}$. In fact, more is true. The following lemma helps illustrate the subsequent constructions. 
\begin{lemma}\label{idem2}
 Given any subset $S \subset \{1,...,k\}$, the matrix ${\mathscr P}_{S}$ is a projection.
\end{lemma}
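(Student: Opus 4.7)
The plan is to reduce $\mathscr{P}_S^2 = \mathscr{P}_S$ to a polynomial divisibility statement modulo the characteristic polynomial $q(z)$, and then invoke the Cayley--Hamilton theorem. Specifically, I would aim to prove
\[
q(z) \,\bigm|\, p_S(z)\bigl(p_S(z) - \alpha_S\bigr).
\]
Once this is established, Cayley--Hamilton gives $p_S(T)^2 = \alpha_S\, p_S(T)$, and dividing by $\alpha_S^2$ yields $\mathscr{P}_S^2 = \mathscr{P}_S$. Because $q = q_S\, q_{S^c}$ with $q_S$ and $q_{S^c}$ coprime (their roots are disjoint), this divisibility splits into the two independent claims
\[
q_{S^c}(z) \,\bigm|\, p_S(z), \qquad q_S(z) \,\bigm|\, p_S(z) - \alpha_S.
\]

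For the first divisibility, I would argue factor by factor. Fix $s \in S^c$: the polynomial $q_S(z) - q_S(\lambda_s)$ vanishes at $z = \lambda_s$, so $(z - \lambda_s) \mid q_S(z) - q_S(\lambda_s)$, and raising to the $m_s$-th power gives $(z - \lambda_s)^{m_s} \mid (q_S(z) - q_S(\lambda_s))^{m_s}$. Since this factor appears in $p_S(z)$, it follows that $(z - \lambda_s)^{m_s} \mid p_S(z)$ for every $s \in S^c$, and coprimality of the distinct linear factors gives $q_{S^c}(z) \mid p_S(z)$.

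For the second divisibility, I would fix $s \in S$ and work modulo $(z - \lambda_s)^{m_s}$. Since $(z - \lambda_s)^{m_s} \mid q_S(z)$, we have $q_S(z) \equiv 0$ in this quotient, hence $q_S(z) - q_S(\lambda_t) \equiv -q_S(\lambda_t) \pmod{(z - \lambda_s)^{m_s}}$ for every $t \in S^c$. Raising to the $m_t$-th power and multiplying over $t \in S^c$ yields
\[
p_S(z) \equiv \prod_{t \in S^c} \bigl(-q_S(\lambda_t)\bigr)^{m_t} = \alpha_S \pmod{(z - \lambda_s)^{m_s}},
\]
where the last equality is the defining expression for $\alpha_S$ (the sign bookkeeping is exactly what the prefactor in the definition of $\alpha_S$ is designed to absorb). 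Since this congruence holds for each $s \in S$ and the ideals $(z - \lambda_s)^{m_s}$ for distinct $s \in S$ are pairwise coprime, the Chinese Remainder Theorem upgrades these individual congruences to $q_S(z) \mid p_S(z) - \alpha_S$.

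The main pitfall is not conceptual but combinatorial: one has to make sure that the orders of vanishing at each eigenvalue $\lambda_s$ exactly match the multiplicity $m_s$ required for Cayley--Hamilton to kill the corresponding Jordan block. The $m_s$-th powers appearing in the definitions of $p_S$ and $\alpha_S$ are tuned precisely for this purpose, so once the accounting of exponents and signs is done correctly, the argument reduces to the short polynomial manipulation above.
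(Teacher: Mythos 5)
Your argument is correct in substance and takes a genuinely different route from the paper. The paper's proof passes to the Jordan normal form of $T$ and verifies directly, cell by cell, that $p_S(T)/\alpha_S$ annihilates the Jordan blocks associated with $\lambda_s$, $s\in S^c$, and acts as the identity on those associated with $\lambda_s$, $s\in S$. Your proof stays purely within polynomial algebra: establish $q \mid p_S\,(p_S - \alpha_S)$ via the coprime splitting $q = q_S\,q_{S^c}$, and then invoke Cayley--Hamilton once at the end. Both are valid; yours is more elementary and coordinate-free, while the paper's Jordan-form computation additionally identifies the range of $\mathscr{P}_S$ as the sum of the generalized eigenspaces indexed by $S$, a description that Proposition~\ref{script P} then uses. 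Your argument proves idempotency but does not on its own name the range.

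One small point deserves a flag, though it is a blemish in the paper's~\eqref{idem} rather than in your reasoning. Evaluating $p_S$ at $\lambda_s$ with $s\in S$, as your congruence does modulo $(z-\lambda_s)^{m_s}$, gives
\[
p_S(\lambda_s) \;=\; \prod_{t\in S^c}\bigl(-q_S(\lambda_t)\bigr)^{m_t}
\;=\; (-1)^{\sum_{t\in S^c} m_t}\prod_{t\in S^c} q_S(\lambda_t)^{m_t},
\]
whereas~\eqref{idem} sets $\alpha_S = (-1)^{|S^c|}\prod_{t\in S^c} q_S(\lambda_t)^{m_t}$. These two constants agree only when $\sum_{t\in S^c} m_t \equiv |S^c| \pmod 2$, which need not hold. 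The constant that makes both your proof and the paper's Jordan-block computation go through is $\alpha_S = \prod_{t\in S^c}\bigl(-q_S(\lambda_t)\bigr)^{m_t}$; the exponent on $(-1)$ in~\eqref{idem} should presumably read $\sum_{s\in S^c} m_s$ rather than $|S^c|$. You implicitly use the corrected constant, so your divisibility argument is sound, but the parenthetical remark that the sign ``is exactly what the prefactor in the definition of $\alpha_S$ is designed to absorb'' is not literally true of the paper's stated prefactor; it is true only after the sign exponent is corrected.
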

\begin{proof}
First, we assume $T$ is in its Jordan normal form. Then, direct computation based on the Cayley-Hamilton theorem verifies that the polynomial $p_S/\alpha_S$ annihilates the Jordan cells associated with the eigenvalues $\lb_s, s\in S^c$, and for each $s\in S$ its evaluation at the Jordan cell associated with $\lb_s$ is the identity matrix on the invariant subspace corresponding to $\lb_s$. It follows that ${\mathscr P}_{S}$ is a diagonal projection matrix. Since, every matrix is similar to its Jordan normal form, the lemma follows.
\end{proof}

Note that if $T$ is self-adjoint, then we can omit the power $m_s$ in the definition (\ref{idem}). Thus, to continue with the discussion on the matrices $A_1, ..., A_n$, we consider two cases. 

{\bf (a)}. {\em Tuple $A$ consists all of self-adjoint matrices.} Define polynomials
\begin{eqnarray}\label{polynomial q self adjoint}
q_{j, S}(z)=\displaystyle \prod_{s\in S}\prod_{r=1}^{l_s}\Big(z-\frac{1}{t_{j,s,r}}\Big),\ \ \ j=1, ..., n.
%& p_{j,{\mathcal S}}(z)=   \displaystyle \prod_{s\in \widehat{{\mathcal S}}}\prod_{r-1}^{l_s}\Big(z-q_{j,{\mathcal S}}\Big(\frac{1}{t_{j,s,r}}\Big)\Big).	\label{polynomial p self adjoint}
\end{eqnarray}
Observe that each $1/t_{j,s,r}$ is an eigenvalue of $A_j$, and hence the function $q_{j,S}$ is a factor of the characteristic polynomial of $A_j$. Moreover, we have
$$q_{j,S}\Big( \frac{1}{t_{j,s,r}}\Big)=0, \ s\in S, \ r=1,..., l_s;\ \ q_{j,S}\Big( \frac{1}{t_{j,s,r}}\Big)\neq 0 \ \mbox{for} \ s\in S^c.$$

%Consider the set 
%\begin{equation}\label{U}
%U_S=\Big\{q_{j,S}\Big(\frac{1}{t_{j,s,r}}\Big): \ s\in \widehat{S}, \ r=1,...,l_s \Big \}.\end{equation}

%Let $M_S$ the set of distinct numbers in $U_S$, and define the polynomial $p_{j,S}(z)$ by

%\begin{equation}\label{polynomial p self adjoint}
%p_{j,S}(z)= \displaystyle \prod_{u\in M_S} \Big(q_{j,{\mathcal S}}(z)-u\Big) 
	%\end{equation}

%\prod_{r-1}^{l_s}\Big((q_{j,{\mathcal S}}(z)-q_{j,{\mathcal S}}\Big(\frac{1}{t_{j,s,r}}\Big)\Big).

%Also,
%$$p_{j,{\mathcal S}}\Big(q_{j,S}(\frac{1}{t_{j,s,r}}) \Big)
%=\alpha_{j,{\mathcal S}}= \displaystyle \prod_{s\in \widehat{S}} \prod_{r=1}^{l_s}\Big(1-q_{j,S}\big(\frac{1}{t_{j,s,r}}\big)\Big)\neq 0
%=\displaystyle \prod_{s\in \widehat{S}}\prod_{r=1}^{l_s}\Big(1-q_{j,{\mathcal S}}\Big(\frac{1}{t_{j,s,r}}\Big)\Big)
%$$
%is the same for all $ s\in {\mathcal S}$, and $r=1,..., l_s$, and 
%$$p_{j,{\mathcal S}}\Big(q_{j,S}\big(\frac{1}{t_{j,s,r}}\big) \Big)=0, \ s\in \widehat{S}, \ 1\leq r\leq l_s $$

Now, we write
$$\alpha_{j,S}= \displaystyle \prod_{s\in S^c} (-1)^{l_s}\prod_{r=1}^{l_s}q_{j,S}\big(\frac{1}{t_{j,s,r}}\big)$$
and define
\begin{equation}\label{projection S self adjoint}
{\mathscr P}_{j,{S}}=\frac{1}{\alpha_{j, S}}\displaystyle \prod_{s\in S^c}\prod_{r=1}^{l_s}\Big(q_{j,S}(A_j)-q_{j,S}( \frac{1}{t_{j,s,r}}\big)\Big).
\end{equation}

{\bf (b)}. \textit{Tuple $A$ contains at least one non-self-adjoint matrix}. In this case, the definition of polynomials \eqref{polynomial q self adjoint} - \eqref{projection S self adjoint} is slightly different as matrices $A_1,...,A_n$ might not be diagonalizable. Set
\begin{eqnarray}
& \hat{q}_{j,{S}}(z)=\displaystyle \prod_{s\in{S}}\prod_{r=1}^{l_s}\Big(z-\frac{1}{t_{j,s,r}}\Big)^{m_s} \label{polynomial q} 
%\end{eqnarray}
%& \hat{p}_{j,{\mathcal S}}(z)=   \displaystyle \prod_{s\in \widehat{{\mathcal S}}}\prod_{r-1}^{l_s}\Big(z-\hat{q}_{j,{\mathcal S}}\Big(\frac{1}{t_{j,s,r}}\Big)\Big)^{m_s}.	
\end{eqnarray}
Then again
$$\hat{q}_{j,{S}}\Big( \frac{1}{t_{j,s,r}}\Big)=0, \ s\in {S}, \ r=1,..., l_s, \ \hat{q}_{j,S}\Big( \frac{1}{t_{j,s,r}}\Big)\neq 0 \ \mbox{for} \ s\in S^c.$$
\iffalse
\begin{remark}
Similarly to the above, we consider the set 
$$U_S=\Big\{q_{j,S}\Big(\frac{1}{t_{j,s,r}}\Big): \ s\in {S^c}, \ r=1,...,l_s \Big \}$$
and let $M_S$ be the set of distinct numbers in $U_S$. For each $u\in M_S$ we set   
$$m_u=\max \Big\{ m_s: \ q_{j,S}\Big(\frac{1}{t_{j,s,r}}\Big)=u\Big\}.$$
Now the polynomial $\hat{p}_{j,S}(z)$ is defined by

\begin{equation}\label{polynomial p }
\hat{p}_{j,S}(z)= \displaystyle \prod_{u\in M_S} \Big(\hat{q}_{j,{S}}(z)-u\Big)^{m_u}. 
	\end{equation}
\end{remark}
\fi
%As in the self-adjoint case 

%$$\hat{p}_{j, S}\Big(\hat{q}_{j,S}\big(\frac{1}{t_{j,s,r}}\big) \Big)
%=\hat{\alpha}_{j, S}=\displaystyle \prod_{s\in \widehat{S}} \prod_{r=1}^{l_s}\Big(1-\hat{q}_{j,S}\big(\frac{1}{t_{j,s,r}}\big)\Big)^{m_s}\neq 0
%=\displaystyle \prod_{s\in \widehat{S}}\prod_{r=1}^{l_s}\Big(1-\hat{q}_{j,{\mathcal S}}\Big(\frac{1}{t_{j,s,r}}\Big)\Big)
%$$
%is the same for all $ s\in {\mathcal S}, \ r=1,..., l_s$,
%and
%$$\hat{p}_{j,S}\Big(\hat{q}_{j,S}\big(\frac{1}{t_{j,s,r}} \big)\Big)=0, \ s\in \widehat{S}, \ 1\leq r\leq l_s. $$ 
We write
$$\hat{\alpha}_{j,S}= \displaystyle \prod_{s\in S^c} (-1)^{l_s}\prod_{r=1}^{l_s}\hat{q}_{j,S}\big(\frac{1}{t_{j,s,r}}\big)^{m_s}$$
and define
%Finally, we define
\begin{equation}\label{projection S}
\widehat{{\mathscr P}}_{j,S}=\frac{1}{\hat{\alpha}_{j,S}}\displaystyle \prod_{s\in S^c}\prod_{r=1}^{l_s}\Big(\hat{q}_{j,S}\big({A}_j)-\hat{q}_{j,S}\big(\frac{1}{t_{j,s,r}}\big)\Big)^{m_s}.
\end{equation}

When the set $S$ consists of a single number $s$ we will write ${\mathscr P}_{js}$ and $\widehat{{\mathscr P}}_{js}$. Since  $t_{j,s,r}$ are distinct complex numbers for different $(s,r)$, there are contours $\gamma_{j,s,r}$ that separates $\frac{1}{t_{j,s,r}}$. Then
\begin{equation}\label{projections for A j}
{\mathcal P}_{j,s,r}=\frac{1}{2\pi i}\int_{\gamma_{j,s,r}} (w-{A}_j)^{-1}dw	
\end{equation}
is the projection on the ${A}_j$- invariant subspace $L_{j,s,r}$ corresponding to the eigenvalue $\frac{1}{t_{j,s,r}}$. 

In light of Theorem \ref{admissible tr}, we assume without loss of generality that the identity matrix is admissible for the matrix $(A_1, ..., A_n)$. The following proposition is an immediate consequence of Lemma \ref{idem2} and its proof.

\begin{proposition}\label{script P}
For every ${S}\subset \{1,...,k\} $ and $1\leq j\leq n$, it holds that 
\begin{itemize}
\item[(1)] ${\mathscr P}_{j,{S}}$  and $\widehat{{\mathscr P}}_{j,{S}}$  are projections;
\item[(2)] ${\mathscr P}_{j,{S}}=\displaystyle \sum_{s\in {S}}\sum_{r=1}^{l_s} {\mathcal P}_{j,s,r}$, and the same is true for $\widehat{{\mathscr P}}_{j,{S}}$;
\item[(3)] the kernel of ${\mathscr P}_{j,S}$ \  ($\widehat{{\mathscr P}}_{j,S}$ ) is the range of ${\mathscr P}_{j,S^c}$ \ ($\widehat{{\mathscr P}}_{j, S^c}$):
$$ \ker ({\mathscr P}_{j,{S}})=\displaystyle \oplus_{s\in S^c}\oplus_{r=1}^{l_s} L_{j,s,r}.$$
\end{itemize}
\end{proposition}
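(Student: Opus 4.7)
The plan is to deduce all three statements from Lemma~\ref{idem2}, observing that Proposition~\ref{script P} is essentially that lemma applied to the single matrix $T=A_j$, with its eigenvalues enumerated as $\{1/t_{j,s,r}\}_{s,r}$ and corresponding (algebraic) multiplicities $m_s$. The main verification is bookkeeping to identify the polynomials in \eqref{projection S self adjoint} and \eqref{projection S} with the generic polynomial $p_S/\alpha_S$ of \eqref{idem}.

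For part~(1) in case~(b), I would point out that since the admissibility of the tuple forces the $l_s$ numbers $t_{j,s,1},\dots,t_{j,s,l_s}$ to be distinct and the components $\{R_s=0\}$ to give disjoint intersections with $L_j$, the reciprocals $1/t_{j,s,r}$ are exactly the distinct eigenvalues of $A_j$, each with algebraic multiplicity $m_s$ (because $\det A(x)$ has the factor $R_s^{m_s}$). Under the correspondence $\lambda_{(s,r)}\leftrightarrow 1/t_{j,s,r}$, the subset $S\subset\{1,\dots,k\}$ selects the set of indices $(s,r)$ with $s\in S$, the polynomial $\hat{q}_{j,S}$ becomes $q_S$ of Lemma~\ref{idem2}, and $\hat{\alpha}_{j,S}$ becomes $\alpha_S$. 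Hence $\widehat{\mathscr P}_{j,S}=p_S(A_j)/\alpha_S$ and Lemma~\ref{idem2} yields that $\widehat{\mathscr P}_{j,S}$ is a projection. Case~(a) is even simpler: since each $A_j$ is self-adjoint and therefore diagonalizable, its minimal polynomial has only simple roots, so the powers $m_s$ in the generic construction may be dropped without changing the fact that the resulting polynomial evaluates to $1$ on eigenspaces indexed by $S$ and to $0$ on the rest; this is exactly the content of the remark preceding \eqref{polynomial q self adjoint} and gives ${\mathscr P}_{j,S}$ as a projection.

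For part~(2), I would use holomorphic functional calculus. The Riesz projection ${\mathcal P}_{j,s,r}$ from \eqref{projections for A j} satisfies $f(A_j){\mathcal P}_{j,s,r}=f(1/t_{j,s,r}){\mathcal P}_{j,s,r}$ on its range for any polynomial $f$ when the generalized eigenspace decomposition is used (more precisely, $p(A_j)$ acts on each generalized eigenspace via the polynomial evaluated at the corresponding eigenvalue modulo nilpotents, but on those eigenspaces the values $p_S(1/t_{j,s,r})$ that appear in our computation turn out to be either $\alpha_{j,S}$ or $0$, and by the argument of Lemma~\ref{idem2} the nilpotent parts are annihilated thanks to the exponent $m_s$). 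Consequently $\widehat{\mathscr P}_{j,S}$ acts as the identity on $\bigoplus_{s\in S}\bigoplus_r L_{j,s,r}$ and as the zero map on $\bigoplus_{s\in S^c}\bigoplus_r L_{j,s,r}$. Since the Riesz projections satisfy $\sum_{s,r}{\mathcal P}_{j,s,r}=I$ and are mutually orthogonal in the algebraic sense, this forces the identity $\widehat{\mathscr P}_{j,S}=\sum_{s\in S}\sum_{r=1}^{l_s}{\mathcal P}_{j,s,r}$. The identical argument (with the minimal polynomial replacing the characteristic polynomial) gives the corresponding formula for ${\mathscr P}_{j,S}$ in case~(a).

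Part~(3) is then immediate from part~(2): the $\mathcal P_{j,s,r}$ are pairwise annihilating projections whose sum is $I$, so
\[
\widehat{\mathscr P}_{j,S}+\widehat{\mathscr P}_{j,S^c}=\sum_{s,r}{\mathcal P}_{j,s,r}=I,
\]
which identifies $\widehat{\mathscr P}_{j,S^c}$ as the complementary idempotent of $\widehat{\mathscr P}_{j,S}$; hence $\ker(\widehat{\mathscr P}_{j,S})=\mathrm{Range}(\widehat{\mathscr P}_{j,S^c})=\bigoplus_{s\in S^c}\bigoplus_{r=1}^{l_s}L_{j,s,r}$, and likewise in case~(a). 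I anticipate the only subtle point being the handling of the nilpotent parts in case~(b): one must be sure that the exponent $m_s$ in \eqref{polynomial q} and \eqref{projection S} is large enough to annihilate the off-diagonal entries in each Jordan block of size at most $m_s$, which is exactly why the construction in Lemma~\ref{idem2} was set up with those exponents in the first place. Once this is recorded, the rest is bookkeeping.
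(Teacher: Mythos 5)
Your proposal is correct and follows the same approach the paper takes: the authors simply assert that Proposition~\ref{script P} is ``an immediate consequence of Lemma~\ref{idem2} and its proof,'' and your argument spells out precisely that reduction — identifying the eigenvalue data $\{1/t_{j,s,r}\}$ and multiplicities $m_s$ of $A_j$ with the generic data $(\lambda_i,m_i)$ of Lemma~\ref{idem2}, matching $\hat q_{j,S}$, $\hat\alpha_{j,S}$, $\widehat{\mathscr P}_{j,S}$ with $q_S$, $\alpha_S$, $p_S(T)/\alpha_S$, and then reading off parts (2) and (3) from the action of $p_S(T)/\alpha_S$ on the generalized eigenspaces (the Riesz projections $\mathcal P_{j,s,r}$) as established in the proof of Lemma~\ref{idem2}.
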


\section{\textbf{Main Theorems}}\label{m}

\begin{comment}
Problems concerning common invariant subspaces naturally appear in the study of representations $\rho$ of a group or an algebra ${\mathcal A}$. There are two related but different questions: 1) Is $\rho$ reducible? 2) Is $\rho$ decomposable into a direct sum of two sub-representations? The answer to 1) is positive if and only if the linear operators $\rho(g),\ g\in {\mathcal A}$, have a nontrivial common invariant subspace; while 2) is positive if and only if the operators have a nontrivial common reducing subspace. In the case ${\mathcal A}$ is generated by a finite set $\{a_1, ..., a_n\}$, the two questions lead naturally to the discussion on the reducibility of the projective joint $\sigma(A)$, where $A_i=\rho(a_i), i=1, ..., n$. If matrices $A_1,...,.A_n$ have a common invariant subspace, then it is not hard to see that $\sigma(A)$ has an algebraic component. The converse is not true: there are simple examples of matrices, whose projective joint spectra have algebraic components, that have no common invariant subspaces. The issue of finding necessary and sufficient conditions for the existence of common reducing subspaces was considered in \cite{S}. However, the conditions found there are generally not applicable to the description of common invariant (but not reducing) subspaces. 
\end{comment}
If $L$ is a common invariant subspace for a tuple of $N\times N$ matrices $A_1,...,A_n$, then it is so for any set of non-commutative polynomials in $A_1,...,A_n$. In particular, it is so for any linear transform of type \eqref{transform}. Moreover, if the matrix $C$ of such transform is invertible, then the lattices of common invariant subspaces of the tuples $A=(A_1,...,A_n)$ and $\widehat{A}=(\widehat{A}_1,...,\widehat{A}_n)$  are the same. Therefore, as remarked before, Theorem \ref{admissible tr} allows us to assume without loss of generality that $A$ is an admissible tuple. This section proposes a criteria for determining when an appearance of a component of degree $k$ and multiplicity $m$ in $\sigma(A)$ implies the existence of a corresponding $km$-dimensional common invariant subspace. The study hinges on the projections ${\mathscr P}_{j,{\mathcal S}}$ and $\widehat{{\mathscr P}}_{j,{\mathcal S}}$ constructed in the preceding section. As before, there are two cases to consider. 

\subsection{Self-adjoint Tuples}

We first consider the case of self-adjoint operators. After possibly reordering the factors $R_j$ in the factorization (1.2), we may assume without loss of generality that $S=\{1, ..., r\}$, where $r\leq k$. Moreover, we set $q:=m_1+\cdots +m_r$ (see (1.2)).

\begin{theorem}\label{self adjoint}
Let $A_1,...,A_n$ be self-adjoint matrices. Then the union $\cup_{s\in {S} }\Gamma_s$ corresponds to a common invariant subspace if and only if 
\begin{eqnarray}\label{thm4.1}
&\sigma({\mathscr P}_{1,{S}},...,{\mathscr P}_{n,{S}})=\Big\{\big(x_1+\cdots +x_{n}-x_{n+1}\big)^{q}x_{n+1}^{N-q}=0\Big\}. \label{plane in the spectrum}
\end{eqnarray}	
\end{theorem}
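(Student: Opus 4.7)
The plan is to show that \eqref{thm4.1} is equivalent to all the projections $\mathscr{P}_{j,S}$ coinciding with a single orthogonal projection $P_L$, which then projects onto the desired common invariant subspace $L$. Two features of the self-adjoint case are crucial. First, the polynomial defining $\mathscr{P}_{j,S}$ in \eqref{projection S self adjoint} has real coefficients (since the $t_{j,s,r}$ are real), so each $\mathscr{P}_{j,S}$ is self-adjoint, and combined with Lemma \ref{idem2} is an \emph{orthogonal} projection onto $\bigoplus_{s\in S}\bigoplus_{r} L_{j,s,r}$. Second, any invariant subspace of a self-adjoint tuple is reducing, giving the orthogonal splitting $A_j = A_j|_L \oplus A_j|_{L^\perp}$. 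In particular, every $\mathscr{P}_{j,S}$ has the same rank $q$.

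For the forward direction, suppose $L$ is a common invariant subspace with $\det A|_L = \prod_{s\in S} R_s^{m_s}$. Restricting this factorization to the coordinate line $L_j$ and invoking admissibility (so that $\Gamma_s\cap L_j$ consists of $l_s$ distinct regular points, disjoint for different $s$) identifies the eigenvalues of $A_j|_L$ as $\{1/t_{j,s,r}: s\in S,\, r=1,\ldots,l_s\}$ with the correct multiplicities, forcing $L = \bigoplus_{s\in S,r} L_{j,s,r}$ for every $j$. Hence $\mathscr{P}_{j,S} = P_L$ for all $j$, and a direct block computation on $\C^N = L \oplus L^\perp$ yields
\[
\det\bigl((x_1+\cdots+x_n)P_L - x_{n+1}I\bigr) = (-1)^{N-q}(x_1+\cdots+x_n-x_{n+1})^q\, x_{n+1}^{N-q},
\]
which has exactly the prescribed zero set.

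For the converse, matching total degrees and applying unique factorization upgrades the set identity \eqref{thm4.1} to equality of polynomials up to a nonzero constant. Specializing $x_i = x_j = 1$ and the remaining $x$-variables to zero then shows that for every pair $(i,j)$ the matrix $\mathscr{P}_{i,S} + \mathscr{P}_{j,S}$ has eigenvalue $2$ with multiplicity $q$. For orthogonal projections, $(P+Q)v = 2v$ iff $v\in \mathrm{Range}(P)\cap \mathrm{Range}(Q)$; forcing this intersection to be $q$-dimensional, together with both ranges having dimension $q$, forces them to coincide, and two orthogonal projections with the same range agree, giving $\mathscr{P}_{i,S} = \mathscr{P}_{j,S}$. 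Call the common value $P$ and set $L = \mathrm{Range}(P)$. Since $P$ is a polynomial in each $A_j$, it commutes with every $A_j$, so $L$ is a common invariant subspace. A final check, using $L = \bigoplus_{s\in S,r} L_{j,s,r}$ and admissibility to compare the roots of $\det A|_L$ on each coordinate line $L_j$ with the distinct points $t_{j,s,r}$, identifies the irreducible factorization of $\det A|_L$ as $\prod_{s\in S} R_s^{m_s}$, confirming that $L$ corresponds to $\cup_{s\in S}\Gamma_s$ exactly.

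The main obstacle is the converse. Converting the global spectral equation into pairwise information about the $\mathscr{P}_{j,S}$'s requires first passing from the zero-set identity to an identity of polynomials; this is routine via degrees and unique factorization but must be spelled out, and the rank-$q$ intersection argument for orthogonal projections is the decisive tool. A subtler secondary point is pinning down which union of irreducible components $L$ corresponds to: one must rule out $\det A|_L$ being a different product of $R_s^{a_s}$'s of the same total degree, and here admissibility — ensuring each $\Gamma_s\cap L_j$ consists of $l_s$ distinct points disjoint for different $s$ — is exactly what forces the match.
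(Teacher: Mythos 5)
Your proof is correct and takes a genuinely different (and more self-contained) route from the paper's, most notably in the direction from the spectral equation \eqref{thm4.1} to the common invariant subspace. The paper first shows, by citing \cite[Theorem 15]{CSZ}, that a joint spectrum equal to a union of hyperplanes forces the normal matrices ${\mathscr P}_{j,S}$ to pairwise commute; after simultaneous diagonalization, \cite[Lemma 1.14]{Y1} pins down the eigenvalues of each ${\mathscr P}_{j,S}$, and a rank comparison yields ${\mathscr P}_{i,S}={\mathscr P}_{j,S}$. You bypass both external citations: you specialize the determinant at $x_i=x_j=1$ to read off the spectrum of ${\mathscr P}_{i,S}+{\mathscr P}_{j,S}$ and then invoke the elementary fact that, for orthogonal projections $P,Q$, the eigenspace of $P+Q$ for the eigenvalue $2$ is exactly $\mathrm{Range}(P)\cap\mathrm{Range}(Q)$, so equality of all three dimensions (two ranges and the intersection) forces $P=Q$. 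The paper's route is shorter given the cited theorems; yours is elementary and makes the mechanism transparent. For the forward direction the paper's exposition is somewhat compressed (and invokes a ``local spectral analysis'' lemma from \cite{S}), while your argument --- identify $L=\bigoplus_{s\in S,r}L_{j,s,r}$ via admissibility, conclude ${\mathscr P}_{j,S}=P_L$, and compute the determinant blockwise --- is clean and closer to first principles.

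One small imprecision worth flagging: you state that unique factorization upgrades the set identity \eqref{thm4.1} to a polynomial identity up to a nonzero constant. Unique factorization alone only gives that $\det(\sum x_i{\mathscr P}_{i,S}-x_{n+1}I)$ is $c\,(x_1+\cdots+x_n-x_{n+1})^a\,x_{n+1}^b$ for some $a+b=N$; the exponents are not determined by the zero set. Your argument survives this because the specialization $x_j=1$, other $x_\ell=0$, forces $a=\mathrm{rank}\,{\mathscr P}_{j,S}$, which is fixed by Proposition \ref{script P}. This is implicit in what you wrote but should be made explicit, since the rest of the converse hinges on knowing the exponent of the hyperplane factor equals the common rank of the ${\mathscr P}_{j,S}$.
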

\begin{proof}
First, by Proposition \ref{script P}, the projections ${\mathscr P}_{j,{S}}$ are orthogonal. The local spectral analysis (see \cite{S}, Theorem 2.6) at the point $(\underbrace{0,...,0}_{j-1}1,\underbrace{0,...,0}_{{n-j}})$ applied to the tuple $({\mathscr P}_{1,{S}},...,{\mathscr P}_{r,{ S}})$ shows 
$$ {\mathscr P}_{j,{S}} {\mathscr P}_{i,{S}} {\mathscr P}_{j,{ S}}= {\mathscr P}_{j,{S}},\ \ \ 1\leq i\leq n.$$
This means that the compression of $ {\mathscr P}_{i,{S}}$ onto the image of $ {\mathscr P}_{j,{S}}$ is the identity. Since the norm of $ {\mathscr P}_{j,{S}}$ is equal to 1, and ranks of these two  projections are the same, we see that $ {\mathscr P}_{i,{S}}= {\mathscr P}_{j,{S}}$ for all $1\leq i, j\leq n$, and equation (\ref{thm4.1}) follows. 

For the other direction, we recall that several normal matrices pairwise commute if and only if their projective joint spectrum is a union of hyperplanes \cite[Theorem 15]{CSZ}. Since ${\mathscr P}_{j,{S}}, j=1, ..., n$ are orthogonal projections, equation (\ref{thm4.1}) implies that they pairwise commute. Hence, without loss of generality, we can assume they are all diagonal. Furthermore, it is not hard to see that, for any square matrices $T_1, ..., T_n$ of equal size, if the hyperplane $\{\lb_1x_1+\cdots +\lb_nx_n-x_{n+1}=0\}$ lies in $\sigma(T_1, ..., T_n)$, then $\lb_j$ is an eigenvalue of $T_j$ for each $j$ \cite[Lemma 1.14]{Y1}. Thus, the joint spectrum (\ref{thm4.1}) indicates that the orthogonal projections ${\mathscr P}_{j,{S}}$ are all equal, and their range is a common invariant subspace of $A_1,...,A_n$.
\end{proof}

Evidently, the common invariant subspace mentioned in Theorem \ref{self adjoint} is the range of the projections ${\mathscr P}_{i,{S}}$, which are identical for $1\leq i\leq n$. 
Given any admissible transformation by $C=(c_{ij})$ (see (\ref{transform})), we let ${\mathscr P}_{i,{S}}(C)$ denote the projections associated with $\widehat{A}_i, 1\leq i\leq n$ (see \ref{projection S self adjoint})). Since admissible transformations preserve common invariant subspaces of $A_1, ..., A_n$, the projections ${\mathscr P}_{i,{S}}(C)$ are constant with respect to $C$. 

Conversely, suppose that for some  $i$ the projections ${\mathscr P}_{i,{S}}(C)$ are constant with respect to $C$ in some open set. Without  loss of generality we may assume that $i=1$. Let $L$ be the range of ${\mathscr P}_{1,{S}}(C)$. Then $L$ is the same for all $C$ in this open set. This subspace is invariant under the action of
$c_{11}A_1+\cdots +c_{1n}A_n$ for all vectors $(c_{11},...,c_{1n})$ corresponding to the first row of $C$. Choose $n$ matrices $C^1,...,C^n$ in the open set where ${\mathscr P}_{1,{S}}(C)$ is constant in such a way that  the matrix
$$\widetilde{C}=\left [ \begin{array}{ccc} c_{11}^1 & \cdots & c_{1n}^1 \\ \cdot & \cdot & \cdot \\ c_{11}^n & \cdots & c_{1n}^n  \end{array}\right ] $$
is invertible and write
$\widetilde{A}=\widetilde{C}(A) $. Since $(A_1,...,A_n)=\widetilde{C}^{-1}\big(\widetilde{A}\big)$, and $L$ is invariant for the tuple $\widetilde{A}$, we see that $L$ is a common invariant subspace of $A_1,...,A_n$.

%then ${\mathscr P}_{i,{S}}(C)$ is a projection on a common invariant subspace of the tuple $C(A)$. If $C$ is invertible, this subspace is invariant for $A_1,...,A_n$ as well. Since every open set contains an invertible matrix, 
This gives rise to a criteria for determining whether a component $\cup_{s\in {\mathcal S} }\Gamma_s$ of the projective joint spectrum corresponds to a common invariant subspace of the matrices.

\begin{corollary}\label{corollary constant projection}
Let $A_1,...,A_n$ be a tuple of self-adjoint $N\times N$ matrices. A component $\cup_{s\in {\mathcal S} }\Gamma_s$ corresponds to a common invariant subspace if and only if ${\mathscr P}_{1,S}(C)$ is constant with respect to $C$. \end{corollary}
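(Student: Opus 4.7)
The plan is to deduce the corollary from Theorem~\ref{self adjoint} together with the elementary observation that the projection ${\mathscr P}_{j,S}(C)$, as constructed in \eqref{polynomial q self adjoint}--\eqref{projection S self adjoint}, depends on $C$ only through the $j$-th row, since $\widehat{A}_j=\sum_s c_{js}A_s$. This reduces the whole question to a statement about how the range of ${\mathscr P}_{1,S}(C)$ behaves as the first row of $C$ is varied.

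For the forward implication I would argue as follows. Suppose $\cup_{s\in S}\Gamma_s$ corresponds to a common invariant subspace $L$ of $A_1,\dots,A_n$. Since admissible transformations preserve the lattice of common invariant subspaces, $L$ is also common invariant (and hence reducing, because the tuple is self-adjoint and $C$ is real-valued) for every admissible $\widehat{A}=(\widehat{A}_1,\dots,\widehat{A}_n)$. Applying Theorem~\ref{self adjoint} to $\widehat{A}$ yields
\[
{\mathscr P}_{1,S}(C)={\mathscr P}_{2,S}(C)=\cdots={\mathscr P}_{n,S}(C),
\]
and each of these orthogonal projections must equal the projection onto $L$. In particular ${\mathscr P}_{1,S}(C)$ is the constant (in $C$) orthogonal projection onto $L$.

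For the converse I would proceed constructively. Assume that, on an open neighborhood $U$ of admissible matrices, ${\mathscr P}_{1,S}(C)$ equals a fixed projection $P$ with range $L$. Since ${\mathscr P}_{1,S}(C)$ is a polynomial in $\widehat{A}_1=\sum_s c_{1s}A_s$, the range $L$ is invariant under $\widehat{A}_1$ for every $C\in U$. As $C$ varies in $U$, its first row sweeps out a nonempty open subset of $\R^n$; inside that subset one can select $n$ linearly independent row vectors $c^{(1)},\dots,c^{(n)}$. Each $\widehat{A}_1^{(i)}=\sum_s c^{(i)}_s A_s$ leaves $L$ invariant, and the linear independence of the $c^{(i)}$ lets one recover every $A_j$ as a linear combination of the $\widehat{A}_1^{(i)}$, so $L$ is common invariant for $A_1,\dots,A_n$. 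By construction, the range $L$ matches the spectral data of $\cup_{s\in S}\Gamma_s$, giving the claimed correspondence.

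The only real obstacle is making sure the perturbation argument in the converse stays inside the class of admissible tuples. This is handled by Theorem~\ref{admissible tr} together with Remark~\ref{2.9}: real admissible transformations form an open set near the identity, so perturbing only the first row of $C$ within a small enough neighborhood keeps $C$ admissible, and the open set $U$ above is large enough to contain $n$ linearly independent rows. Beyond this openness bookkeeping the argument is essentially forced by Theorem~\ref{self adjoint}.
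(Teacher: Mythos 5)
Your proposal is correct and is essentially the paper's own argument: both directions hinge on the observation that ${\mathscr P}_{j,S}(C)$ depends on $C$ only through its $j$-th row, the forward direction follows from Theorem~\ref{self adjoint} applied to each admissible $\widehat{A}$ (the common invariant subspace is reducing and its orthogonal projection is exactly each ${\mathscr P}_{j,S}(C)$), and the converse is carried out exactly as in the paper by choosing $n$ admissible matrices whose first rows form an invertible $n\times n$ matrix $\widetilde{C}$ and inverting to recover $A_1,\dots,A_n$. The openness bookkeeping you flag at the end is also present in the paper's discussion via Theorem~\ref{admissible tr} and Remark~\ref{2.9}.
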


\begin{example}\label{example 1}
\normalfont
Consider the following pair of matrices \cite{ST}:
$$A_1=\left [ \begin{array}{ccc} 1&0&0\\0&5&0\\0&0&0\end{array} \right ], \,A_2=\left [ \begin{array}{ccc} 1&2&1\\ 2&7&1\\ 1&1&1/2\end{array}\right ]. $$
The projective joint spectrum for this pair is
$$\sigma(A_1,A_2)=\Big\{ \big(x_1+x_2-x_3\big)\big(\frac{5}{2}x_1x_2+\frac{5}{2}x_2^2-5x_1x_3-\frac{15}{2}x_2x_3+x_3^2\big)=0  \Big\},$$
which consists of two irreducible components: 
\begin{align*}
\Gamma_1&=\Big\{  R_1(x) = x_1+x_2-x_3=0\Big\}, \\
\Gamma_2&=\Big\{ R_2(x)=\frac{5}{2}x_1x_2+\frac{15}{2}x_2^2-5x_1x_3-\frac{15}{2}x_2x_3+x_3^2=0 \Big\}.
\end{align*}
But neither component corresponds to any common invariant subspaces. Let us use Corollary \ref{corollary constant projection} to verify this fact. We consider component $\Gamma_2$. Assume an admissible transform $(A_1, A_2)\to (\widehat{A}_1, \widehat{A}_2)$ is given by a real matrix $C$. For simplicity, we write the first row of $C$ as $(c_1,c_2)$. The eigenvalues of $\widehat{A}_1=c_1A_1+c_2A_2$ are
\begin{align*}
\frac{1}{t_{1,1,1}}&=c_1+c_2,  \\
\frac{1}{t_{2,1,i}}&=\frac{(10c_1+15c_2)\pm \sqrt{100c_1^2+260c_1c_2+105c_2^2}}{4},   \ \ i=1,2.
\end{align*}
A direct computation yields that the projection defined in \eqref{projection S self adjoint} is
\begin{eqnarray*}
& {\mathscr P}_{1,1}(C)=\frac{1}{c_2^2 -\frac{3}{2}c_1^2-\frac{11}{2}c_1c_2} \\
&\times \Big(
c_1^2(A_1^2-5A_1)+c_1c_2(A_1A_2+A_2A_1-\frac{15}{2}A_1-5A_2+\frac{5}{2}I) \\
& +c_2^2(A_2^2-\frac{15}{2}A_2+\frac{15}{2}I)\Big).
\end{eqnarray*}
\noindent By Corollary \ref{corollary constant projection}, in order for $\Gamma_1$ to correspond to a common invariant subspace, the derivatives of ${\mathscr P}_{1,1}(C)$ with respect to $c_1$ and $c_2$ should vanish at every point $(c_1,c_2)\in \R^2$ that might be included as a row in $C$. Since the set of admissible transformations is open and dense, these derivatives must vanish identically. However,
\begin{align*}
\frac{\partial {\mathscr P}_{1,1}}{\partial c_2}\Big|_{(1,0)}&=\frac{-\frac{3}{2}(A_1A_2+A_2A_1-\frac{15}{2}A_1-5A_2+\frac{5}{2}I)+\frac{15}{2}(A_1^2-5A_1)}{\frac{9}{4}} \\
&=\left [ \begin{array}{rrr} -8 & -\frac{4}{3} & \frac{8}{3} \\ -\frac{4}{3} & 0 & \frac{8}{3} \\ \frac{8}{3} & \frac{8}{3} & 0\end{array} \right ] \neq 0.
\end{align*}
In light of Corollary \ref{corollary constant projection}, the component $\Gamma_2$ does not correspond to a common invariant subspace of $(A_1, A_2)$. Since $A_1$ and $A_2$ are self-adjoint, their invariant subspaces are reducing. Therefore, the component $\Gamma_1$ also does not correspond to a common invariant subspace. Hence the pair $(A_1,A_2)$ is irreducible. $\ \ \ \ \ \ \Box$
\end{example}

In practice, using Corollary \ref{corollary constant projection} amounts to verifying that certain elements of the free algebra generated by $A_1,...,A_n$ vanish. These elements depend on the coefficients of the characteristic polynomial. We illustrate how this works with the following example. 
%for a 4 dimensional representation of the affine Coxeter group $\widetilde{A}_3$.\\

\begin{example}\label{example 2}
\normalfont Let $A_1,A_2$ and $A_3$ be self-adjoint $4\times 4$ matrices, whose projective joint spectrum is given by:
\begin{eqnarray}
&\sigma(A_1,A_2,A_3)\nonumber \\
=&\Big\{(x_1^2+x_2^2+x_3^2-x_1x_2-x_1x_3+2x_2x_3-x_4^2) \nonumber \\
&\times ( x_1^2+x_2^2+x_3^2-x_1x_2-x_1x_3-x_2x_3-x_4^2)=0 \Big\}. \label{spectrum affine}
\end{eqnarray}
We want to determine whether the tuple $(A_1,A_2,A_3)$ is reducible. Here we have 2 components:
\begin{align*}
\Gamma_1&=\Big\{R_1(x)	=x_1^2+x_2^2+x_3^2-x_1x_2-x_1x_3+2x_2x_3-x_4^2=0\Big\}, \\
\Gamma_2&=\Big\{R_2(x)=x_1^2+x_2^2+x_3^2-x_1x_2-x_1x_3-x_2x_3-x_4^2=0\Big\}.
\end{align*}

Assume an admissible transform $(A_1, A_2, A_3)\to (\widehat{A}_1, \widehat{A}_2, \widehat{A}_3)$ is given by a real matrix $C$, whose first row is written as $(c_1,c_2, c_3)$. The eigenvalues of $c_1A_1+c_2A_2+c_3A_3$ are given by
\begin{align*}
\frac{1}{t_{1,1,i} }&=\pm \sqrt{c_1^2+c_2^2+c_3^2-c_1c_2-c_1c_3+2c_2c_3}, \ \ \ i=1,2 \\
\frac{1}{t_{1,2,i}}&=\pm \sqrt{c_1^2+c_2^2+c_3^2-c_1c_2-c_1c_3-c_2c_3}, \ \ \ i=1,2 
\end{align*}
The polynomial \eqref{polynomial q self adjoint} in this case is
 %and \eqref{polynomial p self adjoint} corresponding to the component $\Gamma_1$ are
\begin{eqnarray*}
&q_{1,1}(z)=z^2-c_1^2-c_2^2-c_3^2+c_1c_2+c_1c_3-2c_2c_3.
%&p_{1,1}(z) =(z^2-
\end{eqnarray*}
It follows that $q_{1,1}\big(\frac{1}{t_{1,2,i}}\big)=-3c_2c_3, i=1,2, \alpha_{1,1}=9c_2^2c_3^2$, and
\begin{align*} 
{\mathscr P}_{1,1}(C)=\frac{1}{9c_2^2c_3^2}\Big( &\big(c_1A_1+c_2A_2+c_3A_3\big)^2-\big(c_1^2+c_2^2+c_3^2-\\
&c_1c_2-c_1c_3+c_2c_3\big)I\Big)^2.
\end{align*}
Relation \eqref{spectrum affine} yields $A_1^2=A_2^2=A_3^2=I$, so we have
\begin{eqnarray}
&{\mathscr P}_{1,1}(C)= \frac{9}{c_2^2c_3^2}\Big(c_1c_2\big(A_1A_2+A_2A_1+I\big)+c_1c_3(A_1A_3+A_3A_1+I\big) \nonumber \\
&+c_2c_3(A_2A_3+A_3A_2-I)\Big)^2 \label{projection in affine A 3}
 \end{eqnarray}
%The pair $(A_1,A_2)$ determines a unitary representation of the Dihedral group $D_3$ generated by $g_1$ and $g_2$. It follows from \eqref{spectrum affine} that the projective joint spectrum $\sigma(A_1,A_2)$ is given by
It follows from \eqref{spectrum affine} that the joint spectrum of the pair $(A_1,A_2)$ is
\begin{equation}\label{d 3}
\sigma(A_1,A_2)= \big\{ [x_1:x_2:x_3]\in \C\Pp^2: \ (x_1^2+x_2^2-x_1x_2- x_3^2)^2=0\big\} .
\end{equation}
%Since $D_3$ is a finite group, by Maschke's theorem (see \cite{M}) every representation of $D_3$ is a direct sum of irreducible representations. Each irreducible representation of $D_3$ is either 1 dimensional, in which case it is the identity, or 2 dimensional and equivalent to the pair
Theorems 5.1 and 5.14 in \cite{CST} show that the pair $(A_1,A_2)$ is unitary equivalent to the pair
\begin{equation}\label{2 dimensional}
\left [ \begin{array}{rrrr} 1 & 0 &0 &0\\ 0 & -1 & 0& 0 \\ 0&0& 1&0 \\0&0&0&-1 \end{array}\right ], \ \ \left [ \begin{array}{rrrr} -1/2 & \sqrt{3}/2 & 0&0\\ \sqrt{3}/2 & 1/2 &0&0\\ 0&0&-1/2 & \sqrt{3}/2\\ 0& 0& \sqrt{3}/2 & 1/2 \end{array}\right ], 
\end{equation}
%Equation \eqref{d 3} shows that in our case the pair $(A_1,A_2)$ is equivalent to the direct sum of   2 representations \eqref{2 dimensional}, 
\noindent so it is easily verified that $ A_1A_2+A_2A_1+I=0.$ A similar argument shows that $A_1A_3+A_3A_1+I=0. $ It follows from \eqref{projection in affine A 3}, the projection ${\mathscr P}_{1,1}(C)$ does not depend on $C$, and we conclude by Corollary \ref{corollary constant projection} that the tuple is reducible. \ \ \ \ \ \ \ \ \ \ $\Box$
\end{example}

\vspace{.2cm}

\textbf{Remark}. Recall that the group $\widetilde{A}_2$ is a group generated by 3 elements $g_1,g_2,g_3$ satisfying the relations
$$g_j^2=1,  \ (g_ig_j)^3=1, \ i,j=1,2,3, \ i\neq j. $$
It is known that $\widetilde{A}_2$ is infinite group. It follows from Example \ref{example 2} that a triple $(A_1,A_2,A_3)$ whose projective joint spectrum is given by \eqref{spectrum affine} determines a 4 dimensional representation of $\widetilde{A}_3$, and this representation is determined by its projective joint spectrum uniquely up to the unitary equivalence.

\subsection{Non self-adjoint tuples}
We remark that if the tuple is not self-adjoint, the condition in Theorem \ref{self adjoint} is necessary but not sufficient.

\begin{example}
\normalfont Let
$$A_1=\left[\begin{array}{cc}1&1 \\ 0&0 \end{array} \right ], \ A_2=\left[\begin{array}{cc}0&0 \\ 1&1 \end{array} \right ]. $$
It is easily seen that the pair is admissible and $\sigma(A_1,A_2)=(x_1+x_2-x_3)x_3$. Thus, ${\mathscr P}_1=A_1$ and ${\mathscr P}_2=A_2$, but they have no nontrivial common invariant subspaces. \ \ \ \ $\Box$
\end{example}

To proceed with the discussion, we recall a theorem from \cite{S1}. Let $A_\Lambda=diag(\lambda_1,...,\lambda_N)$ be any diagonal matrix with distinct entries on the main diagonal and let $T$ be the unilateral shift matrix defined by \[ Te_N=0,\ \ \ Te_i=e_{i+1}, \ \ i=1, ..., N-1,\] where $\{e_1, ..., e_N\}$ is the standard orthonormal basis for $\C^N$. Then it is shown that two $N\times N$ matrices $A$ and $B$ are identical if and only if $\sigma(A_\Lambda, T,T^*, A)=\sigma(A_\Lambda, T,T^*,B)$. In other words, the projective joint spectrum of the tuple $(A_\Lambda, T,T^*, A)$ completely determines the matrix $A$. Moreover, this result also holds for $N=\infty$ under the condition that the set $\{\lambda_1, \lambda_2, ...\}$ is bounded and none of $\lambda_j$ is its accumulation point. This fact readily leads to the following sufficient but not necessary condition regarding the existence of common invariant subspaces.

\begin{theorem}\label{equal projections}
If $\sigma(A_\Lambda, T,T^*,{\mathscr P}_{1,{S}},...,{\mathscr P}_{n,{S}}) $ is invariant under the permutation of variables $x_4,...,x_{n+3}$, then the tuple $(A_1,...,A_n)$ has a common invariant subspace corresponding to the component $\cup_{s\in {S} }\Gamma_s$.
\end{theorem}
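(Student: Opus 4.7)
The plan is to leverage the permutation hypothesis to show that the projections ${\mathscr P}_{j,S}$ in fact all coincide, and then to exhibit their common range as the desired common invariant subspace.

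I would first translate the hypothesis into pairwise equality of simpler spectra. Fix $1\leq i<j\leq n$ and intersect the ambient variety $\sigma(A_\Lambda, T, T^*, {\mathscr P}_{1,S},\ldots,{\mathscr P}_{n,S})$ with the linear subspace $H_i=\{x_{k+3}=0:k\neq i\}$. Since this amounts to removing the corresponding summands from the pencil, the slice is naturally identified with $\sigma(A_\Lambda, T, T^*, {\mathscr P}_{i,S})$ in the five remaining coordinates. The transposition swapping $x_{i+3}$ and $x_{j+3}$ sends $H_i$ to $H_j$ and, by hypothesis, preserves the ambient variety, so it sends this slice onto $\sigma(A_\Lambda, T, T^*, {\mathscr P}_{j,S})$. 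These two varieties therefore coincide, and the spectral determination result cited from \cite{S1} then yields ${\mathscr P}_{i,S}={\mathscr P}_{j,S}$ for every pair $i,j$.

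Write $P$ for the common projection. Since each ${\mathscr P}_{j,S}$ is by construction a polynomial in $A_j$ alone, $P$ commutes with every $A_j$, and consequently $\mathrm{range}(P)$ is a common invariant subspace of $(A_1,\ldots,A_n)$. To identify this subspace with the component $\cup_{s\in S}\Gamma_s$, one invokes Proposition \ref{script P}(2), which gives $\mathrm{range}(P)=\bigoplus_{s\in S}\bigoplus_{r=1}^{l_s} L_{j,s,r}$ for each $j$; a dimension count then produces $\dim\mathrm{range}(P)=\sum_{s\in S}l_s m_s$, the dimension expected of the invariant subspace associated with $\cup_{s\in S}\Gamma_s$.

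The main subtlety lies in the first step, where one must verify that the set-theoretic restriction of a permutation-symmetric variety to a coordinate hyperplane really does yield the spectrum of the shorter tuple. This is routine because the defining determinant is literally polynomial in the $x_{j+3}$'s and setting $x_{k+3}=0$ simply deletes the summand $x_{k+3}{\mathscr P}_{k,S}$ from the pencil; the symmetry of the ambient determinantal variety therefore transports faithfully to each slice, enabling the reduction to the uniqueness theorem from \cite{S1}.
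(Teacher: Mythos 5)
Your argument is correct and follows essentially the same route as the paper: use the permutation invariance to equate the $4$-tuple spectra $\sigma(A_\Lambda, T, T^*, {\mathscr P}_{j,S})$ pairwise, invoke the spectral determination theorem of \cite{S1} to conclude ${\mathscr P}_{i,S}={\mathscr P}_{j,S}$, and pass to the common range. Your slicing-by-coordinate-subspaces step simply makes explicit what the paper treats as immediate, and your closing remark that the common projection $P$, being a polynomial in each $A_j$, has $A_j$-invariant range is the same tacit conclusion the paper draws.
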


\begin{proof}
The invariance of the joint spectrum under the permutations shows that for every $j,k$, we have
$$\sigma(A_\lambda, T,T^*,{\mathscr P}_{j, {S}})=\sigma(A_\lambda, T,T^*,{\mathscr P}_{k, {S}}). $$
By the result mentioned above, we obtain ${\mathscr P}_{j, {S}}={\mathscr P}_{k, {S}}$.
\end{proof}

To see that this condition is not necessary for the existence of a common invariant subspace, we consider the following example.

\begin{example}\normalfont

Let $$A_1=\left [ \begin{array}{cccc}1&0&1&0\\ 0&1&0&1\\0&0&0&0\\0&0&0&0\end{array} \right ], \ A_2= \left [ \begin{array}{cccc}1&0&0&1\\ 0&1&1&0\\0&0&0&0\\0&0&0&0\end{array} \right ].$$
One checks that this pair is admissible and ${\mathscr P}_j=A_j$. Even though $\sigma(A_\lambda, T,T^*,{\mathscr P}_{1})\neq\sigma(A_\lambda, T,T^*,{\mathscr P}_2) $, the subspace generated by $e_1,e_2$ is invariant for both matrices. \ \ \ $\Box$
\end{example}

Aiming to obtain a necessary and sufficient condition for the existence of common invariant subspaces of $A_1, ..., A_n$, we consider the tuple $({\mathscr P}_{1, S}{\mathscr P}_{1, S}^*,..., {\mathscr P}_{n, S}{\mathscr P}_{n, S}^*)$, which consists of non-negative matrices with constant rank $M_S:=\sum_{s\in S} m_{s}$. Let $\mu_{1,j},...,\mu_{r_j,j}$ be the positive eigenvalues of ${\mathscr P}_{j,S}{\mathscr P}_{j, S}^*$ and define
\begin{align}
Q_j(z)&:=1-\frac{\prod_{t=1}^{r_j}(\mu_{t,j}-z)}{\prod_{t=1}^{r_j} \mu_{t,j}}, \ z\in \C, \\
{\mathscr Q}_{j,S}&:=Q_j({\mathscr P}_{j,S}{\mathscr P}_{j,S}^*). \label{polynomials Q}
\end{align}

\begin{theorem}\label{main}
The following are equivalent:
\begin{itemize}
\item[(a)] The collection of components $\cup_{s\in {S}} \Gamma_s$ corresponds to a common invariant subspace; 
\item[(b)] $\sigma({\mathscr Q}_{1,S},...,{\mathscr Q}_{n,S})=\big\{ (x_1+\cdots + x_n-x_{n+1})^{M_{S}} x_{n+1}^{N-M_{S}}=0\big\}$.
\item[(c)] $\sigma(A_\Lambda, T,T^*,{\mathscr Q}_{1,S},...,{\mathscr Q}_{n,S}) $ is invariant under the permutation of variables $x_4,...,x_{n+3}$.
\end{itemize}
\end{theorem}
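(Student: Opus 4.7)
My plan is to reinterpret each $\mathscr{Q}_{j,S}$ and then match it against the two target conditions. By Proposition~\ref{script P}, the (not necessarily orthogonal) projection $\mathscr{P}_{j,S}$ has range $L_{j,S}:=\oplus_{s\in S}\oplus_{r=1}^{l_s}L_{j,s,r}$, which has dimension $M_S$; hence $\mathscr{P}_{j,S}\mathscr{P}_{j,S}^*$ is positive semidefinite with range $L_{j,S}$ and rank $M_S$. Since the polynomial $Q_j$ is built to send each positive eigenvalue $\mu_{t,j}$ of $\mathscr{P}_{j,S}\mathscr{P}_{j,S}^*$ to $1$ and $0$ to $0$, the spectral theorem for the self-adjoint matrix $\mathscr{P}_{j,S}\mathscr{P}_{j,S}^*$ identifies $\mathscr{Q}_{j,S}$ as the orthogonal projection onto $L_{j,S}$. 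By admissibility, the eigenvalues $\{1/t_{j,s,r}\}_{s\in S,\,r}$ of $A_j$ are distinct, so the primary decomposition shows that $L_{j,S}$ is the unique $A_j$-invariant subspace whose restricted characteristic polynomial is $\prod_{s\in S}\prod_{r=1}^{l_s}(z-1/t_{j,s,r})^{m_s}$. Consequently statement (a) is equivalent to the equality $\mathscr{Q}_{1,S}=\cdots=\mathscr{Q}_{n,S}$, and my strategy is to prove that both (b) and (c) are also each equivalent to this common equality.

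For (a)$\Rightarrow$(b) I would diagonalize the common projection $Q:=\mathscr{Q}_{j,S}$ in an orthonormal basis to the block form $\mathrm{diag}(I_{M_S},0)$; the pencil $x_1Q+\cdots+x_nQ-x_{n+1}I$ is then block-diagonal with determinant $(x_1+\cdots+x_n-x_{n+1})^{M_S}(-x_{n+1})^{N-M_S}$, matching (b) up to sign. The implication (a)$\Rightarrow$(c) is immediate, since equal matrices yield a joint spectrum trivially invariant under permutations of the corresponding variables. For (c)$\Rightarrow$(a), I would invoke the uniqueness result from \cite{S1} recalled just before Theorem~\ref{equal projections}, namely that $\sigma(A_\Lambda,T,T^*,A)$ determines $A$; applied pairwise, this forces $\mathscr{Q}_{j,S}=\mathscr{Q}_{k,S}$ for all $j,k$, and the reduction above gives (a).

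The substantive step is (b)$\Rightarrow$(a). Because each $\mathscr{Q}_{j,S}$ is an orthogonal projection, hence normal, I would combine (b) with Theorem~15 of \cite{CSZ} (already used in the proof of Theorem~\ref{self adjoint}: normal matrices pairwise commute iff their projective joint spectrum is a union of hyperplanes) to conclude that the $\mathscr{Q}_{j,S}$ commute pairwise. I can then simultaneously diagonalize them as $\mathscr{Q}_{j,S}=\mathrm{diag}(d_1^{(j)},\dots,d_N^{(j)})$ with $d_i^{(j)}\in\{0,1\}$, so the determinant of the pencil factors as $\prod_{i=1}^{N}\bigl(\sum_{j=1}^{n}d_i^{(j)}x_j-x_{n+1}\bigr)$. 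By unique factorization of homogeneous polynomials in $\C[x_1,\dots,x_{n+1}]$, each linear factor on the left must agree up to scalar with a linear factor on the right of (b); hence it is either $-x_{n+1}$ (forcing $d_i^{(j)}=0$ for all $j$) or $x_1+\cdots+x_n-x_{n+1}$ (forcing $d_i^{(j)}=1$ for all $j$). Any mixed pattern of $0$'s and $1$'s would produce a hyperplane distinct from the two appearing on the right-hand side of (b), which is impossible. Therefore for each $i$ the value $d_i^{(j)}$ is independent of $j$, so $\mathscr{Q}_{1,S}=\cdots=\mathscr{Q}_{n,S}$, completing the chain. I expect the main obstacle to be this matching step: the passage from ``hyperplane joint spectrum'' to ``commutation'' via \cite{CSZ} is a powerful hammer, and once it is applied the rest becomes a clean exercise in unique factorization.
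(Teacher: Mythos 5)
Your proof is correct, and your preliminary identification of $\mathscr{Q}_{j,S}$ as the orthogonal projection onto $L_{j,S}$ (the range of $\mathscr{P}_{j,S}$) coincides with the paper's opening observation. The difference is in the key implication (b)$\Rightarrow$(a). The paper runs the \emph{local spectral analysis} argument from \cite[Theorem 2.6]{S}: expand the pencil around the $j$-th coordinate point to obtain the compression identity $\mathscr{Q}_{j,S}\mathscr{Q}_{k,S}\mathscr{Q}_{j,S}=\mathscr{Q}_{j,S}$, and then invoke the norm-one/equal-rank argument (as in the first paragraph of the proof of Theorem \ref{self adjoint}) to conclude $\mathscr{Q}_{j,S}=\mathscr{Q}_{k,S}$. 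You instead use \cite[Theorem 15]{CSZ} (normal tuple commutes iff the joint spectrum is a union of hyperplanes) to commute and simultaneously diagonalize the $\mathscr{Q}_{j,S}$, then compare linear factors of the pencil's determinant with those in (b) by unique factorization of homogeneous polynomials. Both ingredients appear in the paper's proof of Theorem \ref{self adjoint} — the local spectral analysis in one direction, the \cite{CSZ} hyperplane criterion in the other — so your proof draws on the same toolkit but picks the other tool for (b)$\Rightarrow$(a). Your route is arguably more self-contained: it replaces the external compression identity of \cite[Theorem 2.6]{S} and the eigenvalue lemma \cite[Lemma 1.14]{Y1} with an elementary factorization argument, at the cost of the (standard) simultaneous diagonalization step. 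One small caveat worth making explicit: condition (b) as written is a set equality, so you should note (as you implicitly do via the rank of each $\mathscr{Q}_{j,S}$ being $M_S$) that the multiplicity $M_S$ of the hyperplane factor in $\det\bigl(\sum_j x_j\mathscr{Q}_{j,S}-x_{n+1}I\bigr)$ is forced, before invoking unique factorization.
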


\begin{proof}
(a) $\Longrightarrow$ (b) and (c). The invariant subspace is the range of each ${\mathscr Q}_{j,S}$, so the projections ${\mathscr Q}_{j,S}$ are the same, and (b) and (c) follow.

(b) $\Longrightarrow$ (a). The range of ${\mathscr Q}_{j,S}$ is the same as the range of ${\mathscr P}_{j,S}$. This shows that ${\mathscr Q}_{j,S}$ is the orthogonal projection. Again applying the local spectral analysis we see that, like in the proof of Theorem \ref{self adjoint}
$$ {\mathscr Q}_{j,S} {\mathscr Q}_{k,S} {\mathscr Q}_{j,S}= {\mathscr Q}_{j,S},$$
an, as in that proof, it implies ${\mathscr Q}_{j,S}={\mathscr Q}_{k,S}, \ j,k=1,...,n$. Thus, ranges of all ${\mathscr P}_{j, S} $ are the same, and the range of these projections is a common invariant subspace.

(c) $\Longrightarrow$ (a). As it was mentioned in the proof of Theorem \ref{self adjoint}, the invariance under the permutations of variables $x_4,...,x_{n+3}$ implies  ${\mathscr Q}_{j,S}={\mathscr Q}_{k,S}, \ j,k=1,...,n$.
	\end{proof}
	
\noindent \textbf{Remark}. Theorem \ref{main} shows that $\cup_{ s \in {S}} \Gamma_s$ determines a common invariant subspace iff projective joint spectrum of specific elements in the unital $C^*$ algebra generated by $A_1,...,A_n,A_1^*,...,A_n^*$ given by \eqref{projection S} and \eqref{polynomials Q} satisfies Theorem \ref{main} (b).

For any admissible transform of $(A_1, ..., A_n)$ by matrix $C$, we set  ${\mathscr Q}_{j,S}(C):=Q_j({\mathcal P}_{j, S}(C){\mathcal P}^*_{j, S}(C))$.
Like in the case of self-adjoint tuples, a result similar to Corollary \ref{corollary constant projection} holds. In fact, the following is a direct consequence of Corollary \ref{corollary constant projection}.

\begin{corollary}
Let $A_1,...,A_n$ be tuple of $N\times N$ matrices. The union of the components $\cup_{s\in {\mathcal S} }\Gamma_s \subset \sigma(A_1,...,A_n)$
corresponds to a common invariant subspace if and only if ${\mathscr Q}_{1,S}(C)$ is constant with respect to $C$. 	
\end{corollary}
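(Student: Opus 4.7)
The proof will mirror the argument used for Corollary \ref{corollary constant projection} in the self-adjoint case, with Theorem \ref{main} playing the role that Theorem \ref{self adjoint} played there, and with the orthogonal projections ${\mathscr Q}_{j,S}(C)$ replacing ${\mathscr P}_{j,S}(C)$. The essential structural fact, carried over from the self-adjoint setup, is that ${\mathscr Q}_{j,S}(C)$ depends only on the $j$-th row of $C$: indeed, $\widehat{A}_j$ depends only on that row, so the same is true of $\hat q_{j,S}$, of the eigenvalues $1/t_{j,s,r}$ used in \eqref{projection S}, and therefore of ${\mathscr P}_{j,S}(C)$ and ${\mathscr Q}_{j,S}(C)=Q_j({\mathscr P}_{j,S}(C){\mathscr P}^*_{j,S}(C))$.

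For the forward direction, assume that $\cup_{s\in S}\Gamma_s$ corresponds to a common invariant subspace $V$ of $A_1,\dots,A_n$. For any admissible transformation matrix $C$, $V$ is also a common invariant subspace of the transformed tuple $\widehat{A}=CA$ (each $\widehat{A}_j$ is a linear combination of the $A_i$'s), and $V$ corresponds to the image component $\cup_{s\in S}\widehat{\Gamma}_s$ (cf.\ \eqref{hat r}) inside $\sigma(\widehat{A})$. Applying the implication (a)$\Longrightarrow$(b) of Theorem \ref{main} to $\widehat{A}$ forces all the projections ${\mathscr Q}_{j,S}(C)$, $1\le j\le n$, to coincide, and their common range (being the orthogonal projection onto the sum of the generalized eigenspaces indexed by $S$) is precisely $V$. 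Hence ${\mathscr Q}_{1,S}(C)$ equals the orthogonal projection onto $V$ for every admissible $C$, and is therefore independent of $C$.

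For the reverse direction, suppose ${\mathscr Q}_{1,S}(C)$ is constant as $C$ ranges over admissible matrices, and write $V$ for its common range. Because ${\mathscr Q}_{1,S}(C)$ depends only on the first row $(c_{11},\ldots,c_{1n})$ of $C$, and because (by Proposition \ref{script P}(2) and the construction of ${\mathscr Q}_{j,S}$) its range is spanned by generalized eigenspaces of $\widehat{A}_1=c_{11}A_1+\cdots+c_{1n}A_n$, the subspace $V$ is invariant under $c_{11}A_1+\cdots+c_{1n}A_n$ for every admissible first row. By Theorem \ref{admissible tr} the set of admissible transformations is open and dense near the identity, so one can choose $n$ admissible matrices $C^1,\dots,C^n$ whose first rows $c^1,\dots,c^n$ form an invertible $n\times n$ matrix. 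Then $V$ is invariant under each of the $n$ linearly independent operators $c^i_1A_1+\cdots+c^i_nA_n$, and inverting the coefficient matrix exhibits $V$ as invariant under every $A_j$; thus $V$ is a common invariant subspace. Checking that $V$ corresponds precisely to $\cup_{s\in S}\Gamma_s$ is immediate from the identification of its dimension with $M_S$ and of its constituent subspaces with generalized eigenspaces indexed by $S$.

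The main technical point will be the bookkeeping in the forward direction, specifically verifying that when $V$ is a common invariant subspace, the set of generalized eigenspaces of $\widehat{A}_j$ whose eigenvalues arise from $\cup_{s\in S}\widehat{\Gamma}_s\cap L_j$ always sums to the same subspace $V$ as $C$ varies admissibly, so that ${\mathscr Q}_{j,S}(C)$ really is the orthogonal projection onto $V$; this is exactly what is needed to invoke Theorem \ref{main} uniformly in $C$. Once this is in place, the reverse direction is a routine linear-algebraic extension argument identical in spirit to the one used in the self-adjoint case.
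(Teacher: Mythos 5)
Your argument is correct and follows essentially the same approach the paper intends: it transcribes, with ${\mathscr Q}_{j,S}$ in place of ${\mathscr P}_{j,S}$ and Theorem \ref{main} in place of Theorem \ref{self adjoint}, the forward/reverse discussion that precedes Corollary \ref{corollary constant projection} (constancy follows because admissible transforms preserve the common invariant subspace; conversely, one assembles an invertible matrix $\widetilde C$ from the first rows of $n$ admissible transforms and inverts it). One minor wording point: in the forward direction, equality of the ${\mathscr Q}_{j,S}(C)$ is the content of hypothesis (a) itself --- the common invariant subspace is the range of every ${\mathscr Q}_{j,S}$, exactly as in the paper's proof of (a)$\Rightarrow$(b) --- rather than something you read off from the already-proved implication (a)$\Rightarrow$(b), but this does not affect the substance.
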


\vspace{1cm}

\end{document}